\documentclass[11pt,a4paper]{amsart}
\pdfoutput=1

\pdfpageattr{/Group <</S /Transparency /I true /CS /DeviceRGB>>}

\usepackage[T1]{fontenc}
\usepackage[american]{babel}
\usepackage[autostyle=true]{csquotes}
\usepackage{geometry}
\usepackage{amsfonts,amsmath,amssymb,amsthm,mathrsfs}
\usepackage{bbm}
\usepackage{tikz}
\usetikzlibrary{calc,shapes,decorations.markings,decorations.text,hobby}
\usepackage{ifthen}
\usepackage{pgfplots}
\usepackage{caption}
\usepackage{capt-of}
\usepackage{subcaption}
\captionsetup[subtable]{labelfont=small, labelformat=brace, position=top}
\captionsetup[subfigure]{labelfont=small, labelformat=brace, position=top}
\usepackage{xspace}
\usepackage{booktabs,tabularx}
\usepackage{dcolumn}
\usepackage{marvosym}
\usepackage{microtype}
\usepackage{enumerate}
\usepackage{mathtools}
\usepackage{multirow}
\usepackage{hyperref}

\DeclareMathOperator\realtravel{lt}
\DeclareMathOperator\freetravel{ft}
\DeclareMathOperator\bias{B}
\DeclareMathOperator\capacity{cap}
\DeclareMathOperator\power{pow}

\newcommand\combinatorialTypes{\sigma} 

\microtypesetup{tracking,kerning,spacing}
\microtypecontext{spacing=nonfrench}


\hypersetup{colorlinks,linkcolor=blue ,citecolor=blue, urlcolor=blue}


\newcommand\polymake{{\tt polymake}\xspace}

\newcommand\neato{{\tt neato}\xspace}
\newcommand\graphviz{{\tt Graphviz}\xspace}


\newcommand\NN{{\mathbb N}}

\newcommand\RR{{\mathbb R}}
\newcommand\ZZ{{\mathbb Z}}

\newcommand\TT{{\mathbb T}}
\newcommand\cS{\mathcal{S}}

\newcommand\cT{\mathcal{T}}



\newcommand\SetOf[2]{\left\{\left.#1\vphantom{#2}\ \right|\ #2\vphantom{#1}\right\}}
\newcommand\smallSetOf[2]{\{{#1}\,|\,{#2}\}}

\newcommand\graphOfShortestPathTrees{\mathcal G}


\renewcommand{\epsilon}{\varepsilon}
\renewcommand{\phi}{\varphi}


\theoremstyle{plain}
    \newtheorem{theorem}{Theorem}
    \newtheorem{corollary}[theorem]{Corollary}
    \newtheorem{lemma}[theorem]{Lemma}
    \newtheorem{proposition}[theorem]{Proposition}
\theoremstyle{definition}
    \newtheorem{remark}[theorem]{Remark}
    \newtheorem{example}[theorem]{Example}

    \newtheorem{observation}[theorem]{Observation}

    \newtheorem{Algorithm}{Algorithm}


\title[Parametric shortest-path algorithms via tropical geometry]{Parametric shortest-path algorithms \\ via tropical geometry}

\author{Michael Joswig \and Benjamin Schr\"oter}

\address[Michael Joswig]{
  Technische Universit\"at Berlin, Chair of Discrete Mathematics/Geometry,
  and MPI MiS, Leipzig, Germany
}
\email{joswig@math.tu-berlin.de}

\address[Benjamin Schr\"oter]{
  Department of Mathematics,
  KTH Royal Institute of Technology,
  Stockholm, Sweden
}
\email{schrot@kth.se}


\thanks{%
This research has been carried out in the framework of Matheon supported by Einstein Foundation Berlin.
Additional support by Institut Mittag-Leffler within the program \enquote{Tropical Geometry, Amoebas and Polytopes} is gratefully acknowledged.
M.~Joswig has further been supported by the Deutsche Forschungsgemeinschaft (DFG, German Research Foundation) under Germany's Excellence Strategy - The Berlin Mathematics Research Center MATH$^+$ (EXC-2046/1, project ID 390685689);  \enquote{Symbolic Tools in Mathematics and their Application} (TRR 195, project-ID 286237555); \enquote{Facets of Complexity} (GRK 2434).
Furthermore, we are very grateful for the support by several colleagues.
Ewgenij Gawrilow's work on the \polymake project is crucial; here he implemented the parameterized Dijkstra algorithm.
Max Klimm was helpful in discussing the application to traffic.
Georg Loho directed our attention to the article \cite{Gallo+Grigoriadis+Tarjan:1989}.
Two anonymous referees provided helpful suggestions to improve the exposition.
Thanks to everybody.
}

\subjclass[2020]{%
  14T90   
  (05C12, 
  68R10, 
  90B20, 
  90C31)} 
\keywords{parametrized shortest paths; Dijkstra's algorithm; traffic networks; tropical geometry}

\begin{document}

\begin{abstract}
  We study parameterized versions of classical algorithms for computing shortest-path trees.
  This is most easily expressed in terms of tropical geometry.
  Applications include shortest paths in traffic networks with variable link travel times.
\end{abstract}
\maketitle

\section{Introduction}
\noindent
One of the most basic classes of algorithmic problems in combinatorial optimization is the computation of shortest paths for all pairs of nodes in a directed graph.
The reader may consult the monograph of Schrijver \cite{Schrijver03:CO_A} for a comprehensive survey.
Here we study parameterized versions where some of the arc weights are unspecified.
It turns out that standard tools such as the Floyd--Warshall algorithm \cite[\S8.4]{Schrijver03:CO_A} or Dijkstra's algorithm \cite[\S7.2]{Schrijver03:CO_A} admit interesting generalizations.
While it is known that the shortest path problem is connected to max-plus linear algebra and tropical geometry (see, e.g., \cite[Chap.~4]{Butkovic10}, \cite[\S5.2]{Tropical+Book}
and their references), this paper investigates how the geometric underpinnings can be exploited algorithmically.

Dijkstra's algorithm and its siblings are among the core tools used, e.g., in devices which help a car driver to navigate a road network.
These efficient methods allow for solving the corresponding shortest path problems almost instantly, even on cheap hardware, and even for fairly large networks.
Methods from robust optimization have been used to take some uncertainty about the link travel times into account, see, e.g., \cite{YuYang:1998}, \cite{MR3907225} and the references there.
Yet the situation for the network provider is quite different from the perspective of the network user.
One reason is that the provider's goal does not necessarily agree with the one of the user:
While the individual driver might be interested in short travel times, the traffic authorities of a metropolitan city might want to, e.g., minimize the total amount of pollution.
More importantly, the traffic authorities seek to achieve a system optimum, whereas the driver cares for an individual objective; cf.\ \cite{Moehring:2005}.
Typically, in relevant cases it is next to impossible to even describe a system optimum.
Here we propose methods from tropical geometry to gain new insight.
For instance, this may be applied to assess the impact of local changes to a network a priori, provided that the number of simultaneous modifications is not too high.

Tropical geometry is a thriving field, which combines algebraic and polyhedral geometry with optimization; cf.~\cite{Tropical+Book}.
The benefit is mutual: geometry gains algorithmic methods through optimization, and topics in theoretical computer science and optimization may be analyzed more deeply through geometric insight.
Examples for the latter include interpreting the decision problem MEAN-PAYOFF in terms of tropical linear programs \cite{AkianGaubertGutermann12}, ordinary linear programs which exhibit a non-strongly polynomial behavior for interior point methods \cite{ABGJ:2018} or applications to multicriteria optimization~\cite{JoswigLoho:1707.09305}.
Examples for the former abound, too; to name just one: the perfect matching problem, classically solved by the Hungarian method \cite[\S17.2]{Schrijver03:CO_A}, computes tropical determinants \cite[Prop.~1.2.5]{Tropical+Book}.
It is worth noting that our parameterized version of computing shortest paths is of particular interest geometrically.
Namely, this is related to enumerating a class of convex polytopes known as \emph{polytropes} or \emph{alcoved polytopes} and has been studied, e.g., by Tran~\cite{Tran:2017}.

Our setup is the following.
Let $\Gamma$ be a directed graph with $n$ nodes and $m$ arcs.
Throughout we will assume that $\Gamma$ has no parallel arcs.
Additionally, each arc will be equipped with a weight.
Then the graph together with the weight function can be encoded as an $n{\times}n$-matrix where the coefficient at position $(u,v)$ is the weight on the arc from $u$ to $v$.
Necessarily we have $m\leq n^2$, with equality if and only if $\Gamma$ is a complete directed graph with $n$ loops.
Since we will be interested in shortest path problems we consider smaller weights as better, and this suggests to use $\infty$ to signal the absence of an arc.
The resulting matrix is a \emph{weighted adjacency matrix} of $\Gamma$.
This leaves the question: what are the weights?

In the context of the shortest path problem a very general answer is the following.
Let $(G,+)$ be a totally ordered abelian group such that $\infty$ is not an element of~$G$.
Then $G\cup\{\infty\}$, equipped with \enquote{$\min$} as the addition and \enquote{$+$} as the multiplication, is a semiring; this is the \emph{$(\min,+)$-semiring} associated with $G$.
Here $\infty$ is neutral with respect to the addition and absorbing with respect to the multiplication.
Via the usual rules for the addition and the multiplication of matrices this entails a semiring structure on the set $(G\cup\{\infty\})^{n\times n}$ of $n{\times}n$-matrices with coefficients in $G\cup\{\infty\}$.
The classical shortest path problem occurs when $G$ is the additive group of the real numbers.
We denote the extension $\RR\cup\{\infty\}$ of this group by $\TT$.
However, it is interesting and useful to go one step beyond by only requiring that $G$ is a commutative semigroup equipped with a partial ordering, which is not necessarily total.
Then, in general, for a pair of nodes there are competing shortest paths whose total weights are incomparable.
We will see that basic algorithmic ideas for solving shortest path problems still remain valid, with minor adjustments.
The case when $G$ is the additive semigroup of tropical polynomials with real coefficients in a fixed number of indeterminates is of particular interest to us.

To look at parameterized versions of shortest path problems is not a new idea.
A first paper which explores connections to polyhedral geometry is Fredman \cite{Fredman:1976}.
Another important precursor of our approach is a paper by Gallo, Grigoriadis and Tarjan \cite{Gallo+Grigoriadis+Tarjan:1989} on a parametric version of the celebrated push--relabel method for computing maximum flows by Goldberg and Tarjan \cite{Goldberg+Tarjan:1988}.
Moreover, shortest path computations have been considered in the context of robust optimization; cf.\ \cite{MR2546839} for a general reference.
For instance, Yu and Yang observed that in a digraph equipped with interval weights, for given nodes $s$ and $t$, it is NP-complete to decide whether there is a shortest $s$--$t$ path whose total weight stays below a certain threshold \cite[Theorem~1]{YuYang:1998}.
Other modern concepts in this area include online techniques (e.g., see \cite{AlonEtAl:2006}) as well as robustness combined with randomization (e.g., see \cite{MatuschkeSkutellaSoto:2015}) and dynamic algorithms (e.g., see \cite{Bernstein:2016}).
The $s$--$t$ shortest path problems addressed in the above models are relevant, e.g., for a single driver who wants to navigate her car through a road network with uncertain link travel times.
Here instead we are considering the all-pairs shortest path problem, which amounts to taking the perspective of the provider of the network; cf.\ Section~\ref{sec:computations} below for computational experiments on real-world data.
These show that our method is also practically useful, provided the output size is not too large.

Our paper is organized as follows.
Section~\ref{sec:floyd-warshall} starts out with a brief sketch on how to generalize the classical algorithm of Floyd and Warshall to the scenario with parameterized arc weights.
Standard results on tropical hypersurfaces are invoked to reveal basic structural insight into the shortest-path problem.
The algorithmic core of this paper, explained in Section~\ref{sec:dijkstra}, is a procedure for enumerating all parameterized shortest-path trees to a fixed target node.
This can be seen as a parameterized analog to Dijkstra's algorithm.
We demonstrate that this algorithm is feasible in practice for few variable arc weights.
Our computational results are summarized in Sections~\ref{sec:computations} and~\ref{sec:concluding}.

\section{Parameterizing the Floyd--Warshall algorithm} 
\label{sec:floyd-warshall}
\noindent
A standard method for computing all shortest paths between any pair of nodes in a directed graph is the Floyd--Warshall algorithm.
This is well-known to have a straightforward interpretation in tropical arithmetic as follows.
We will briefly sketch the method and refer to \cite[\S8.4]{Schrijver03:CO_A} or \cite[Section 5.9]{Ahoetal:1975} for details.

Let $\Gamma$ be a directed graph on $n$ nodes with weighted adjacency matrix $D=(d_{uv})_{u,v}\in \TT^{n\times n}$.
A naive algorithm for obtaining all-pairs shortest paths is to compute the $(n{-}1)$st tropical power $(I\oplus D)^{\odot(n-1)}$ of the matrix $I\oplus D$ where $I=D^{\odot 0}$ is the tropical identity matrix, with coefficients $0$ on the diagonal and $\infty$ otherwise. 
Here we used \enquote{$\oplus$} for the tropical matrix addition, which is defined as the coefficientwise minimum, and \enquote{$\odot$} for the tropical matrix multiplication, i.e., the analog of classical matrix multiplication where \enquote{$\min$} and \enquote{$+$} replace the addition and the multiplication, respectively. In particular, $I\oplus D$ is the weighted adjacency matrix of $\Gamma$ with zeros along the diagonal if the graph has no negative loops. 
For computing shortest paths the $(n{-}1)$st power is enough since any shortest path, if it exists, takes at most $n-1$ arcs.
Each of the $n-2$ multiplications takes $O(n^3)$ time, resulting in a total cost of $O(n^4)$.
We will not discuss clever strategies for multiplying these matrices as we will beat this naive matrix multiplication approach via \eqref{eq:floyd-warshall} below.
Unless there are negative cycles the coefficient of $D^{\odot(n-1)}$ at position $(u,v)$ is the length of a shortest path from node $u$ to~$v$ using exactly $n-1$ arcs, and the coefficient of $(I\oplus D)^{\odot(n-1)}$ at position $(u,v)$ is the length of a shortest path from node $u$ to~$v$.
Moreover, a negative cycle exists if and only if a coefficient on the diagonal of $(I\oplus D)^{\odot n}$ or $D$ is negative.
Formally, the solution to the all-pairs shortest path problem can be written as
\begin{equation}\label{eq:kleene}
  D^* \ = \ I \oplus D \oplus D^{\odot 2} \oplus \cdots \oplus D^{\odot(n-1)} \oplus \cdots \enspace ,
\end{equation}  
which converges to $(I\oplus D)^{\odot (n-1)}= I\oplus \dots \oplus D^{\odot(n-1)}$ if and only if there is no negative cycle.
The matrix $D^*$ is called the \emph{Kleene star} of $D$; cf.\ Butkovi\v{c} \cite[\S1.6.2.1]{Butkovic10}.

Floyd and Warshall's algorithm reduces the complexity of computing $D^*$ to $O(n^3)$ via dynamic programming.
The key ingredient is the weight of a shortest path from $u$ to $v$ with all intermediate nodes restricted to the set $\{1,2,\dots,r\}$, which is
\begin{equation}\label{eq:floyd-warshall}
  d_{uv}^{(r)} \ = \
  \begin{cases}
    0 & \text{if } r=0 \text{ and } u= v\\
    d_{uv} & \text{if } r=0 \text{ and } u\neq v\\
    \min\left( d_{uv}^{(r-1)},\, d_{ur}^{(r-1)}+d_{rv}^{(r-1)}\right) & \text{if } r\geq 1 \enspace .
  \end{cases}
\end{equation}
That is, in the nontrivial step of the computation we check if going through the new node~$r$ gives an advantage.

We set $D^{(r)}=\big(d_{uv}^{(r)}\big)_{u,v}$.
By applying the formula \eqref{eq:floyd-warshall} recursively, the Floyd--Warshall algorithm computes $D^{(n)}$ in $O(n^3)$ time.
The trick is that, with $D^{(r-1)}$ known explicitly, the computation of a single coefficient $d_{uv}^{(r)}$ requires only constant time.
Note that this method is also suitable for detecting negative cycles by checking the diagonal of the result.
A negative cycle exists if and only if some diagonal coefficient of $D^{(n)}$ is negative.
Otherwise $D^{(n)}=(I \oplus D)^{\odot(n-1)}=D^*$.
In general, $D^{(r)}$ is distinct from any tropical power~$(I\oplus D)^{\odot k}$ whenever $1\leq r< n-1$.

\begin{remark}
  For computing all-pairs shortest path in a dense graph with arbitrary weights there is no algorithm known that beats the $O(n^3)$ complexity bound; see \cite[\S8.6]{Schrijver03:CO_A}.
  Yet Floyd and Warshall's algorithm was improved by Fredman in \cite{Fredman:1976} when the edge weights are restricted to be nonnegative.
  Fredman's bound is based on the reduction of computing the Kleene star $D^*$ to tropical matrix multiplication; see \cite[Theorem~5.7 and Corollary~2]{Ahoetal:1975}.
  Moreover, he subdivides the matrix multiplication into the multiplication of smaller block matrices of sizes $n\times\sqrt{n}$ and $\sqrt{n}\times n$, respectively.
  In combination with a clever search method this leads to a bound of $O(n^{5/2})$ comparisons.
  Further, this approach leads to an algorithm of complexity $O(n^3 \log\log(n)/\log(n)^2)$ by Han and Takaoka \cite{Han+Takaoka:2016}; see \cite[\S7.5]{Schrijver03:CO_A} for a general overview of all-shortest paths with nonnegative weights and \cite{Han+Takaoka:2016} for more recent developments.
\end{remark}

\begin{figure}[t]
  \centering
  \begin{minipage}{.47\textwidth}\centering
    \begin{tikzpicture}[scale = 0.7,
                    color = {black}]

\tikzset{->-/.style={decoration={
  markings,
  mark=at position #1 with {\arrow{>}}},postaction={decorate}}}
  \tikzstyle{edge} = [->-=0.7,> = stealth, thin, line cap=round, line join=round];

  \tikzstyle{linestyle} = [ultra thick, line cap=round, line join=round];
  \tikzstyle{linestyle2} = [thin, gray, line cap=round, line join=round];

  \draw[very thin,color=gray] (-3.6,-3.6) grid (3.6,3.6);
  \draw[->, >=latex, color=black!50, thick] (-4.2,0) -- (4.6,0) node[right, color=black] {$x$};
  \draw[->, >=latex, color=black!50, thick] (0,-4.2) -- (0,4.6) node[right, color=black] {$y$};
  
  \coordinate (l0) at (1,2);
  \coordinate (l1) at (-3.6,-2.6);
  \coordinate (l2) at (1,3.6);
  \coordinate (l3) at (3.6,2);
  
  \draw [linestyle] (l0) -- (l1);
  \draw [linestyle] (l2) -- (l0) -- (l3);
  \node [circle,draw,fill=white] at (l0) {};

  \node[fill=white] at (2,3) {$2$};
  \node[fill=white] at (-2,2) {$x+1$};
  \node[fill=white] at (1,-1) {$y$};
  
\end{tikzpicture}

  \end{minipage}
  \hfill
  \begin{minipage}{.47\textwidth}\centering

\begin{tikzpicture}[scale = 0.7,
                    color = {black}]
                    
\tikzset{->-/.style={decoration={
  markings,
  mark=at position #1 with {\arrow{>}}},postaction={decorate}}}
  \tikzstyle{edge} = [->-=0.7,> = stealth, thin, line cap=round, line join=round];

  \tikzstyle{linestyle} = [ultra thick, line cap=round, line join=round];
  \tikzstyle{linestyle2} = [thin, gray, line cap=round, line join=round];

  \draw[very thin,color=gray] (-3.6,-3.6) grid (3.6,3.6);
  \draw[->, >=latex, color=black!50, thick] (-4.2,0) -- (4.6,0) node[right, color=black] {$x$};
  \draw[->, >=latex, color=black!50, thick] (0,-4.2) -- (0,4.6) node[right, color=black] {$y$};
  
  \coordinate (q3) at (-1,1);
  \coordinate (q4) at (1,1);
  \coordinate (q5) at (2.5,-0.5);
  \coordinate (q6) at (2.5,-1.5);
  
  \coordinate (q3up) at (-1,3.6);
  \coordinate (q4up) at (1,3.6);
  \coordinate (q5right) at (3.6,-0.5);
  \coordinate (q6right) at (3.6,-1.5);

  \coordinate (q3ll) at (-3.6,-1.6);
  \coordinate (q6ll) at (0.4,-3.6);
  
  \node[fill=white] at (2.5,2.5) {$4$};
  \node[fill=white] at (0,2.5) {$3+x$};
  \node[fill=white] at (-2.5,2.5) {$4+2x$};
  \node[fill=white,right] at (3,-1) {$\tfrac{9}{2}+y$};
  \node[fill=white] at (2.5,-3) {$6+2y$};
  \node[fill=white] at (-0.5,-1.5) {$2+x+y$};

  \draw [linestyle] (q3) -- (q4) -- (q5) -- (q6);
  \draw [linestyle] (q3ll) -- (q3) -- (q3up);
  \draw [linestyle] (q4) -- (q4up);
  \draw [linestyle] (q5) -- (q5right);
  \draw [linestyle] (q6ll) -- (q6) -- (q6right);
  
  \node [circle,draw,fill=white] at (q3) {};
  \node [circle,draw,fill=white] at (q4) {};
  \node [circle,draw,fill=white] at (q5) {};
  \node [circle,draw,fill=white] at (q6) {};
  
\end{tikzpicture}

  \end{minipage}

  \caption{Two generic tropical plane curves.
    Each region is marked with the term at which the minimum is attained.
    Left: tropical line defined by $\min (2, 1+x, y)$.  Right: tropical quadric defined by $\min (4, 3+x, 4+2x, 2+x+y, 6+2y, \frac{9}{2}+y)$.}
  \label{fig:tropical}
\end{figure}

Our first observation is that the same ideas can be applied in the presence of variable arc weights.
To this end we consider a weighted adjacency matrix where each coefficient is a multivariate polynomial whose coefficients lie in the $(\min,+)$-semiring $\TT=\RR\cup\{\infty\}$.
These polynomials again form a semiring, and thus, via the usual addition and multiplication, the set of $n{\times}n$-matrices with coefficients in $\TT[x_1,\dots,x_k]$ is a semiring, too.

Formally, a $k$-variate tropical polynomial, $f\in\TT[x_1,\dots,x_k]$, with finite \emph{support set} $S\subset\ZZ^k$, is a map which assigns each exponent $a\in S$ a coefficient $\gamma_a\in\RR$.
This gives rise to the \emph{evaluation function} $f(t_1,\dots,t_k) = \min\smallSetOf{\gamma_a + a_1 t_1 + \dots + a_k t_k}{a\in S}$ which sends $t\in\RR^k$ to a real number.
That function is piecewise linear, continuous and concave; cf.\ \cite[\S1.1]{Tropical+Book}.
For each $a\in S$ the set $\smallSetOf{t\in\RR^k}{f(t)=\gamma_a + a_1 t_1 + \dots + a_k t_k}$ is a convex polyhedron.
That set is a \emph{region} of $f$ if that polyhedron is of maximal dimension $k$.
The regions are the domains of linearity of $f$, and they form a polyhedral subdivision of $\RR^k$; cf.\ Figure~\ref{fig:tropical}.
The \emph{tropical hypersurface} $\cT(f)$ is the locus where  at least two terms of $f$ are minimal; i.e., $\cT(f)$ \enquote{lies between} pairs of regions.

Now, if $D$ is a matrix with coefficients in $\TT[x_1,\dots,x_k]$, then each coefficient of $D$ defines a tropical hypersurface and a polyhedral subdivision of~$\RR^k$.
The maximal cells of the common refinement of the polyhedral subdivision taken over all coefficients of $D$ are the \emph{regions} of $D$.
Each region of $D$ is the intersection of the regions of the tropical hypersurfaces corresponding to some set of coefficients of $D$.

\begin{observation}\label{obs:initial}
  The solution to the all-pairs shortest paths problem of a directed graph with $n$ nodes and weighted adjacency matrix $D\in\TT[x_1,\dots,x_k]$ is a polyhedral decomposition of $\RR^k$ induced by up to $n^2$ tropical hypersurfaces corresponding to the nonconstant coefficients of $D^{\odot (n-1)}$.
  On each polyhedral cell the lengths of all shortest paths are linear functions in the $k$ parameters.
\end{observation}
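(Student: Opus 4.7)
The plan is to lift the scalar all-pairs shortest-path computation from $\TT$ to the polynomial semiring $\TT[x_1,\dots,x_k]$ and then extract the polyhedral structure of each matrix entry. Since the extension $\TT\subset\TT[x_1,\dots,x_k]$ is a semiring extension, tropical matrix addition and multiplication are defined on $\TT[x_1,\dots,x_k]^{n\times n}$, so $D^{\odot(n-1)}$ and $(I\oplus D)^{\odot(n-1)}$ make sense and can be computed either by iterated multiplication or by the recursion \eqref{eq:floyd-warshall}; all of this takes place symbolically in the variables $x_1,\dots,x_k$.

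The first key step is to observe that for every specialization $t=(t_1,\dots,t_k)\in\RR^k$ the evaluation map $\textrm{ev}_t\colon\TT[x_1,\dots,x_k]\to\TT$, $f\mapsto f(t)$, is a morphism of semirings, and hence extends entrywise to a semiring morphism on $n{\times}n$ matrices. Consequently, evaluating an entry of $D^{\odot(n-1)}$ at $t$ yields the corresponding entry of the matrix $\textrm{ev}_t(D)^{\odot(n-1)}$ over $\TT$, i.e., the shortest-path length from $u$ to $v$ in the scalar graph with arc weights specialized at $t$ (in the absence of negative cycles, by the discussion around \eqref{eq:kleene}). Thus $D^{\odot(n-1)}$ is the correct symbolic object: its $(u,v)$-entry is a tropical polynomial $p_{uv}\in\TT[x_1,\dots,x_k]$ whose evaluation function is, pointwise in $t$, the shortest-path length from $u$ to $v$.

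Each nonconstant $p_{uv}$ defines, as recalled in the paragraph preceding the statement, a polyhedral subdivision of $\RR^k$ into its regions, with $p_{uv}$ restricted to each region being an affine-linear function $\gamma_a + a_1x_1+\dots+a_kx_k$ coming from a single term. The tropical hypersurface $\cT(p_{uv})$ forms the $(k{-}1)$-skeleton separating these regions. Taking the common refinement of these at most $n^2$ subdivisions (one per entry of $D^{\odot(n-1)}$) yields exactly the regions of $D^{\odot(n-1)}$ in the sense defined in the paragraph above the statement. On any maximal cell $C$ of this common refinement, each $p_{uv}$ agrees on $C$ with a single affine-linear function, and so all shortest-path lengths simultaneously are linear in the parameters $x_1,\dots,x_k$ on $C$.

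There is no real obstacle beyond bookkeeping; the only subtle point is to make sure constant entries (where $p_{uv}\in\RR$, so no hypersurface is contributed) are handled correctly when counting the up-to-$n^2$ hypersurfaces, and to note that the cell structure is unchanged if one replaces $D^{\odot(n-1)}$ by $(I\oplus D)^{\odot(n-1)}$ since the two matrices agree off the diagonal and the diagonal entries of the latter are the constant $0$. Once these remarks are in place, the claim of the observation follows directly from the scalar correctness of iterated tropical matrix multiplication and from the standard description of regions of a tropical polynomial.
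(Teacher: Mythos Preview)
Your argument is correct and in fact supplies more detail than the paper, which states this as an Observation without proof; the content follows immediately from the definitions of regions and common refinements given just before the statement, together with the fact that evaluation at any $t\in\RR^k$ is a semiring morphism (your ``first key step'').

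One small slip worth fixing: your final remark that $D^{\odot(n-1)}$ and $(I\oplus D)^{\odot(n-1)}$ ``agree off the diagonal'' is not true in general, since the former records only walks of length exactly $n-1$ while the latter records walks of length at most $n-1$. What is true, and what the paper tacitly uses (see the matrix in Example~\ref{exmp:initial}), is that when $D$ already has zeros on the diagonal one has $I\oplus D=D$ and hence the two powers coincide everywhere. Either note this assumption explicitly or simply phrase the argument in terms of $(I\oplus D)^{\odot(n-1)}$ throughout.
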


By multiplying the nonconstant tropical polynomials of a matrix $D\in\TT[x_1,\dots,x_k]^{n\times n}$ we obtain one tropical polynomial which yields the tropical hypersurface \emph{induced} by $D$, and this is denoted by $\cT(D)$.
The regions of $\cT(D)$ are precisely the regions of $D$.



\begin{example}\label{exmp:initial} 
  The weighted adjacency matrix
  \begin{equation}\label{eq:initial}
    D \ = \ \begin{pmatrix}
      0 & \infty & \infty & 1 \\
      1 & 0 & \infty & \infty \\
      y & 1 & 0 & \infty \\
      \infty & x & 1 & 0
    \end{pmatrix} \enspace ,
  \end{equation}
  whose coefficients lie in the semiring $\TT[x,y]$ of bivariate tropical polynomials, defines a directed graph, $\Gamma$, on four nodes.
  The third tropical power of $D=I\oplus D$ reads
  \[
    D^{\odot 3} \ = \ \small\begin{pmatrix}
      \min( 2+x, 2+y, 0 ) & \min( 1+x, 3 ) & 2 & 1 \\
      1 & \min( 2+x, 0 ) & 3 & 2 \\
      \min( y, 2 ) & \min( 1+x+y, 1 ) & \min( 2+y, 0 ) & \min( 1+y, 3 ) \\
      \min( 1+x, 1+y, 3 ) & \min( x, 2 ) & 1 & \min( 2+x, 2+y, 0 )
    \end{pmatrix} \enspace .
  \]
  Ten among the 16 coefficients $d_{uv}^{\odot 3}$ are nonconstant.
  Three of the corresponding tropical polynomials are linear and generic, i.e., they have three terms: $d_{1,1}^{\odot 3} = d_{4,4}^{\odot 3} = \min( 2+x, 2+y, 0 )$ and $d_{4,1}^{\odot 3} = \min( 1+x, 1+y, 3 )$.
  Six coefficients are linear but degenerate.
  Among these we have the equalities $d_{1,2}^{\odot 3} = \min( 1+x, 3 ) = 1 + d_{4,2}^{\odot 3}$ and $d_{3,4}^{\odot 3} = \min( 1+y, 3) = 1 + d_{3,1}^{\odot 3}$.
  Only the coefficient $d_{3,2}^{\odot 3} = \min(1+x+y,1)$ is nonlinear; it is a (degenerate) tropical polynomial of degree two.
  In Figure~\ref{fig:initial} the resulting two nondegenerate and four degenerate tropical lines are marked by circles at their apices.
  Those lie at infinity in the degenerate cases.
  The tropical quadric degenerates to an ordinary line, which is marked by two squares.
  We obtain an arrangement of $2+4+1=7$ tropical hypersurfaces.
  Their product is the tropical hypersurface $\cT(D^{\odot 3})$ induced by $D^{\odot 3}$, up to multiplicities, and it has $15$ regions.
  
  Now we want to extract information about shortest paths from this geometric data.
  The diagonal of $D^{\odot 3}$ reveals that there are no negative cycles unless $x<-2$ or $y<-2$.
  All coefficients are finite, and thus $\Gamma$ is strongly connected.
  The \emph{feasible domain} is the set
  \[
    \SetOf{(x,y)\in\RR^2}{x\geq -2,\, y\geq -2} \enspace ,
  \]
  where shortest paths between any two nodes exist.
  It is subdivided into seven regions, four bounded and three unbounded ones.
  Eight of the $15$ regions of $D^{\odot 3}$ are infeasible.
\end{example}

\begin{figure}[th]\centering
  \begin{tikzpicture}[scale = 1.3,
                    color = {black}]

\tikzset{->-/.style={decoration={
  markings,
  mark=at position #1 with {\arrow{>}}},postaction={decorate}}}
  \tikzstyle{edge} = [->-=0.7,> = stealth, thin, line cap=round, line join=round];

  \tikzstyle{linestyle} = [ultra thick, line cap=round, line join=round];
  \tikzstyle{linestyle2} = [thin, gray, line cap=round, line join=round];

   \shade [inner color=gray!70, outer color=white] (-4.6,4) -- (-2,3.6) -- (-2,-2) -- (3.6,-2) -- (4,-4.6) -- (-4.8,-4.8) -- cycle;

  \draw[very thin,color=gray] (-3.6,-3.6) grid (3.6,3.6);
   \draw[->, >=latex, color=black!50, thick] (-4.2,0) -- (4.6,0) node[right, color=black] {$x$};
   \draw[->, >=latex, color=black!50, thick] (0,-4.2) -- (0,4.6) node[right, color=black] {$y$};

   \coordinate (mXX) at ($(-2,-2)$);
   \coordinate (m22) at ($( 2, 2)$);
   \coordinate (mX2) at ($(-2, 2)$);
   \coordinate (m2X) at ($( 2,-2)$);
   \coordinate (m23) at ($( 2, 3.6)$);
   \coordinate (m32) at ($(3.6, 2)$);
   \coordinate (mX3) at ($(-2, 3.6)$);
   \coordinate (m3X) at ($(3.6,-2)$);

	\node [fill=white,inner sep=0pt] at (3,3){\tiny$\arraycolsep=0.4\arraycolsep\ensuremath{
		\begin{pmatrix}
			0 & 3 & 2 & 1\\
			1 & 0 & 3 & 2\\
			2 & 1 & 0 & 3\\
			3 & 2 & 1 & 0
		\end{pmatrix}$}
	};

	\node [fill=white,inner sep=0pt] at (3,0){\tiny$\arraycolsep=0.3\arraycolsep\ensuremath{
		\begin{pmatrix}
			0 & 3 & 2 & 1\\
			1 & 0 & 3 & 2\\
			y & 1 & 0 & 1\!+\!y\\
			1\!+\!y & 2 & 1 & 0
		\end{pmatrix}$}
	};

	\node [fill=white,inner sep=0pt] at (0,3){\tiny$\arraycolsep=0.3\arraycolsep\ensuremath{
		\begin{pmatrix}
			0 & 1\!+\!x & 2 & 1\\
			1 & 0 & 3 & 2\\
			2 & 1 & 0 & 3\\
			1\!+\!x & x & 1 & 0
		\end{pmatrix}$}
	};

	\node [fill=white,inner sep=0pt] at (0,1.5){\tiny$\arraycolsep=0.3\arraycolsep\ensuremath{
		\begin{pmatrix}
			0 & 1\!+\!x & 2 & 1\\
			1 & 0 & 3 & 2\\
			y & 1 & 0 & 1+y\\
			1+x & x & 1 & 0
		\end{pmatrix}$}
	};
	\node [fill=white,inner sep=0pt] at (0,-1.5){\tiny$\arraycolsep=0.3\arraycolsep\ensuremath{
		\begin{pmatrix}
			0 & 1\!+\!x & 2 & 1\\
			1 & 0 & 3 & 2\\
			y & 1\!+\!x\!+\!y & 0 & 1\!+\!y\\
			1\!+\!y & x & 1 & 0
		\end{pmatrix}$}
	};

	\node [fill=white,inner sep=0pt,rotate=90] at (-1.45,0){\tiny$\arraycolsep=0.3\arraycolsep\ensuremath{
		\begin{pmatrix}
			0 & 1\!+\!x & 2 & 1\\
			1 & 0 & 3 & 2\\
			y & 1\!+\!x\!+\!y & 0 & 1\!+\!y\\
			1\!+\!x & x & 1 & 0
		\end{pmatrix}$}
	};

	\node [fill=white,inner sep=0pt, rotate=270] at (1.4,0){\tiny$\arraycolsep=0.3\arraycolsep\ensuremath{
		\begin{pmatrix}
			0 & 1\!+\!x & 2 & 1\\
			1 & 0 & 3 & 2\\
			y & 1 & 0 & 1\!+\!y\\
			1\!+\!y & x & 1 & 0
		\end{pmatrix}$}
	};

   \draw[linestyle] (m22) -- (-4,-4);
   \draw[linestyle] (2,-4.5) -- (m23);
   \draw[linestyle] (-4.5,2) -- (m32);
   \draw[linestyle] (-2,-4.5) -- (mX3);
   \draw[linestyle] (-4.5,-2) -- (m3X);

   \draw[linestyle] (-4.5, 4.5) -- (4.5,-4.5);

   \foreach \i in {-3,...,3}{
    \pgfmathtruncatemacro{\x}{20-2*\i*\i-1.1*\i}%
    \node [circle,fill=gray!\x, inner sep = 2pt]at (\i,-3.6){\small $\i$};
    \node [circle,fill=gray!\x, inner sep = 2pt]at (-3.6,\i){\small $\i$};
    }
   \node [circle,draw,fill=white, inner sep = 2pt] at (mXX)  {$d^{\odot 3}_{1,1}$};
   \node [circle,draw,fill=white, inner sep = 2pt] at (-2  ,-4.5){$d^{\odot 3}_{2,2}$};
   \node [circle,draw,fill=white, inner sep = 2pt] at ( 2  ,-4.5){$d^{\odot 3}_{1,2}$};
   \node [circle,draw,fill=white, inner sep = 2pt] at (-4.5, 2)  {$d^{\odot 3}_{3,1}$};
   \node [circle,draw,fill=white, inner sep = 2pt] at (-4.5,-2)  {$d^{\odot 3}_{3,3}$};
   \node [circle,draw,fill=white, inner sep = 2pt] at (m22){$d^{\odot 3}_{4,1}$};

   \node [rectangle,draw,fill=white, inner sep = 3pt] at (4.5,-4.5){$d^{\odot 3}_{3,2}$};
   \node [rectangle,draw,fill=white, inner sep = 3pt] at (-4.5,4.5){$d^{\odot 3}_{3,2}$};

\end{tikzpicture}

  \caption{%
    Decomposition of $\RR^2$ into the $15$ regions of the Kleene star $D^{\odot 3}$ from Example~\ref{exmp:initial}.
    These are induced by an arrangement of six tropical lines and one tropical quadric.
    On each of the seven feasible regions the lengths of the shortest paths are linear functions in $x$ and $y$.
    The eight infeasible regions are shaded.
  }
  \label{fig:initial}
\end{figure}

Comparing tropical polynomials $f,g\in\TT[x_1,\dots,x_k]$ as real functions we set
\begin{equation}\label{eq:partial-ordering}
  f \leq g \ :\!\iff \ f(z) \leq g(z) \quad \text{for all } z\in\TT^k \enspace .
\end{equation}
This defines a partial ordering.
It is easy to see that $f$ and $g$ are comparable if and only if $f=\infty$, $g=\infty$ or $f(z)=c+g(z)$ for some constant $c\in\RR$.

Consider a matrix $D\in\TT[x_1,\dots,x_k]^{n\times n}$.
We say that $D$ has \emph{separated variables} if each indeterminate occurs, with multiplicity one, in the weight of at most one arc.
In this way there is no dependence among the weights for distinct arcs.
Then each coefficient of $D$ involves a constant plus a sum of indeterminates. We may reduce the number of variables by substituting the sum of indeterminates by a single variable.
Thus we assume that the entries of $D$ are linear tropical polynomials.
It then follows that $k \leq m \leq n^2$.
This property is satisfied by the matrix \eqref{eq:initial} in Example~\ref{exmp:initial}.

\begin{theorem}\label{thm:floyd-warshall}
  Let $D\in\TT[x_1,\dots,x_k]^{n\times n}$ be the weighted adjacency matrix of a directed graph on $n$ nodes with separated variables.
  Then, between any pair of nodes, there are at most $2^k$ pairwise incomparable shortest paths.
  Moreover, the Kleene star $D^*$, which encodes all parameterized shortest paths, can be computed in $O(k \cdot 2^k \cdot n^{3})$ time, if it exists.
\end{theorem}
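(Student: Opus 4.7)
The plan is to exploit the separated-variables hypothesis and reduce everything to bookkeeping over subsets of $[k]$. After the substitution described just before the theorem, every finite arc weight of $\Gamma$ has the form $c_{pq}$ or $c_{pq} + x_{i(p,q)}$, with the map $(p,q) \mapsto i(p,q)$ injective into $[k]$. Hence the tropical weight of any directed walk $\pi$ from $u$ to $v$ is a monomial $c_\pi + \sum_{i \in [k]} m_i(\pi)\,x_i$, where $m_i(\pi) \in \ZZ_{\geq 0}$ counts the traversals of the unique arc carrying $x_i$. Since $D^*$ is assumed to exist, no negative cycle occurs on the feasible region, so every weight-minimizing walk is a simple path and $m_i(\pi) \in \{0,1\}$; the weight of such a walk therefore has the form $c_\pi + \sum_{i \in I_\pi} x_i$ for some subset $I_\pi \subseteq [k]$.

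Second, I would show that any two monomials $c + \sum_{i \in I} x_i$ and $c' + \sum_{i \in J} x_i$ with $I \neq J$ are incomparable under \eqref{eq:partial-ordering}. Pick $i^* \in I \triangle J$ and let $x_{i^*} \to \pm\infty$ with the remaining variables fixed: one limit violates one direction of the inequality and the other limit violates the reverse. Consequently the shortest-path weights between any fixed pair of nodes partition into at most $2^k$ incomparability classes, one per $I \subseteq [k]$, which proves the first assertion.

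For the algorithmic bound I would execute the Floyd--Warshall recursion \eqref{eq:floyd-warshall} tropically with each entry $d^{(r)}_{uv}$ stored as a table $f^{(r)}_{uv} \colon 2^{[k]} \to \TT$, where $f^{(r)}_{uv}(I)$ is the smallest constant $c$ such that some walk from $u$ to $v$ through $\{1,\dots,r\}$ using precisely the variable arcs indexed by $I$ has tropical weight $c + \sum_{i \in I} x_i$. Tropical addition of two entries is the coordinate-wise minimum over the $2^k$ values, and the update reduces to the $(\min,+)$ disjoint-subset convolution
\[
f^{(r)}_{uv}(I) \ = \ \min\!\Bigl( f^{(r-1)}_{uv}(I),\ \min_{I = J \sqcup J'} f^{(r-1)}_{ur}(J) + f^{(r-1)}_{rv}(J') \Bigr) \,,
\]
in which pairs $(J,J')$ with $J \cap J' \neq \emptyset$ are discarded. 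With a suitable implementation of the convolution each coefficient update can be carried out in $O(k \cdot 2^k)$ arithmetic operations, giving the claimed total $O(k \cdot 2^k \cdot n^3)$ over the $n^3$ Floyd--Warshall updates.

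The main obstacle, in my view, is verifying the correctness of working throughout the recursion with only degree-$\leq 1$ monomials. A naive tropical product of two such polynomials generically produces monomials of higher degree arising from pairs $(J,J')$ with $J \cap J' \neq \emptyset$; these correspond to walks revisiting a variable arc. A cycle-peeling argument, using the absence of negative cycles on the feasible region, shows that any such walk can be replaced by one of no greater tropical weight using a strictly smaller set of variable arcs, so the discarded higher-degree terms are dominated by entries already tracked in the table and the reduced representation preserves $d^{(r)}_{uv}$ on the feasible region.
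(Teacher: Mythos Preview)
Your approach is essentially the paper's: both arguments observe that separated variables plus the absence of negative cycles force every shortest-path weight to be a multilinear monomial $c+\sum_{i\in I}x_i$, giving at most $2^k$ possibilities, and then run the Floyd--Warshall recursion \eqref{eq:floyd-warshall} on tropical polynomials with at most $2^k$ terms, costing $O(k\cdot 2^k)$ per update.

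Two small remarks. First, a minor logical inversion: to bound a pairwise-incomparable family by $2^k$ you need ``same $I \Rightarrow$ comparable'' (so an antichain meets each subset $I$ at most once), whereas you argue the converse ``different $I \Rightarrow$ incomparable''; the direction you prove is correct but not the one that delivers the bound. The needed direction is trivial (monomials with the same support differ by a constant, hence are comparable by the remark following \eqref{eq:partial-ordering}), and the paper simply invokes it. Second, your explicit cycle-peeling discussion---justifying that non-multilinear cross-terms arising in the tropical product $d^{(r-1)}_{ur}\odot d^{(r-1)}_{rv}$ may be discarded because they correspond to walks reusing a variable arc and are dominated on the feasible region---is more careful than the paper, which tacitly assumes all intermediate $d^{(r)}_{uv}$ stay multilinear without spelling this out.
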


\begin{proof}
  We consider the case without negative cycles, i.e., cycles whose total weight is comparable to and strictly less than zero.
  Then there is at least one shortest path between any two nodes; for convenience here we take paths of weight $\infty$ into account.
  In each shortest path each arc occurs at most once unless the total weight of the path is $\infty$.
  By our assumption this means that the total weight is equal to $\lambda+x_{i_1}+\dots+x_{i_\ell}$ for some $\lambda\in\TT$ and $x_{i_1}+\dots+x_{i_\ell}$ is a multilinear tropical monomial, i.e., each indeterminate occurs with multiplicity zero or one.
  There are $2^k$ distinct multilinear monomials, and hence this bounds the number of incomparable shortest paths between any two nodes.

  To obtain our complexity result we use the Floyd--Warshall algorithm with the computation of the coefficients $d_{uv}^{\odot(r)}$ via \eqref{eq:floyd-warshall} as the key step.
  In our parameterized scenario each coefficient of $D^{\odot(r-1)}$ is a multilinear tropical polynomial.
  The tropical multiplication, i.e., ordinary sum, of two multilinear monomials takes linear time in the number of indeterminates, which is at most $k$.
  Each coefficient of $D^{(r-1)}$ has at most $2^k$ terms by our bound on the number of incomparable shortest paths.
  We infer that computing $d_{uv}^{(r)}$ takes not more than $O(k\cdot 2^k)$ time, and this yields our claim.
\end{proof}

If the coefficients of $D\in\TT[x_1,\dots,x_k]^{n\times n}$ are linear tropical polynomials, then each coefficient in $D^{\odot(n-1)}$ is a tropical polynomial of degree at most $n-1$, and thus the degree of the tropical hypersurface $\cT(D^{\odot (n-1)})$ does not exceed $n^2(n-1)$, which is of order $O(n^3)$.
This occurs, e.g., when $D$ has separated variables.
\begin{corollary}\label{cor:floyd-warshall}
  With the same conditions as in Theorem~\ref{thm:floyd-warshall}, and if $k$ is considered a fixed constant, all parameterized shortest paths can be computed in $O(n^{3})$ time, if they exist.
\end{corollary}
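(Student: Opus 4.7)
The corollary is a direct consequence of Theorem~\ref{thm:floyd-warshall}, so the plan is essentially to specialize the complexity bound established there. The plan is to invoke the bound $O(k\cdot 2^k\cdot n^3)$ for computing the Kleene star $D^{*}$ from Theorem~\ref{thm:floyd-warshall}, and then observe that under the hypothesis that $k$ is a fixed constant the factor $k\cdot 2^k$ is itself a constant, so it is absorbed into the $O(\,\cdot\,)$ notation. The full parameterized output is encoded in $D^{*}$, so \enquote{computing all parameterized shortest paths} reduces to computing $D^{*}$, provided it exists (i.e., provided there are no negative cycles; this is the hypothesis \enquote{if they exist}).

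Concretely, the steps I would carry out are: first, verify that the hypotheses of Theorem~\ref{thm:floyd-warshall} are preserved (separated variables, $n$ nodes, weighted adjacency matrix $D$ over $\TT[x_1,\dots,x_k]$); second, apply Theorem~\ref{thm:floyd-warshall} to obtain the bound $O(k\cdot 2^k\cdot n^3)$; third, treat $k$ as constant so that $k\cdot 2^k$ contributes only a constant factor, leaving $O(n^3)$.

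There is no real obstacle: the only mildly subtle point is making explicit that the constant hidden in the $O$-notation now depends on $k$, which is legitimate precisely because $k$ is declared to be fixed. No additional combinatorial or geometric argument is required beyond what is already in the proof of Theorem~\ref{thm:floyd-warshall}.
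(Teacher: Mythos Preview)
Your proposal is correct and matches the paper's approach exactly: the corollary is stated without a separate proof, as it follows immediately from the $O(k\cdot 2^k\cdot n^3)$ bound of Theorem~\ref{thm:floyd-warshall} by treating $k\cdot 2^k$ as a constant. Your remark that the hidden constant now depends on $k$ is the only point worth making explicit, and you have done so.
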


\begin{remark}
  The special case when \emph{all} the arc weights are variable is of particular interest to tropical geometry.
  This is equivalent  to enumerating a class of convex polytopes known as \emph{polytropes}; cf.\ Tran~\cite{Tran:2017}.
  The precise connection to our results is beyond the scope of this article.
\end{remark}

\section{A parameterized analog to Dijkstra's algorithm}
\label{sec:dijkstra}
\noindent
The Floyd-Warshall algorithm considered in the previous section is very useful to get a conceptual overview of the shortest-path problem. 
The Kleene star $D^*=(I\oplus D)^{\odot (n-1)}=D^{(n)}$ itself does not directly provide us with the information about all shortest paths for all choices of parameters simultaneously.
Instead this is only determined by the polyhedral decomposition of the parameter space $\RR^k$ into the regions of $D^*$, induced by the tropical hypersurface $\cT(D^*)$.
In this section we propose a method, inspired by Dijkstra's algorithm, to find the regions of $D^*$, given $D$.

Dijkstra’s algorithm is the main method for computing shortest paths used in applications; cf.~\cite[\S7.2]{Schrijver03:CO_A}.
It computes a shortest-path tree directed toward a fixed node.
In this setting it is common to assume that all weights are nonnegative, and this is what we will do here.

Let $\Gamma$ be a directed graph with $n$ nodes, without parallel arcs, and weighted adjacency matrix $D=(d_{uv})_{u,v}\in \TT[x_1,\dots,x_k]^{n\times n}$.
Working with nonnegative weights means that we consider the feasible region of the matrix $D$ within the positive orthant.
The nonnegativity assumption entails that a shortest path from any node to any other node is well defined or, equivalently, the Kleene star $D^*$ exists.
Since we do not assume that $\Gamma$ is strongly connected, we allow for \enquote{shortest paths} of infinite length.

Motivated by an application to traffic networks (cf.\ Section~\ref{sec:computations}) we choose the following setup.
Each arc $(u,v)$ in $\Gamma$ is equipped with a \emph{weight interval} $[\lambda_{uv},\mu_{uv}]$ subject to
\[
  0 \ \leq \ \lambda_{uv} \ \leq \ \mu_{uv} \ \leq \ \infty \enspace .
\]
If $\mu_{uv}=\infty$ then we abuse the notation $[\lambda_{uv},\mu_{uv}]$ for the ray $\SetOf{x\in\RR}{x\geq\lambda_{uv}}$.
Similarly, if $\lambda_{uv}=\mu_{uv}=\infty$, then $[\infty,\infty]$ is the empty set which signals the absence of an arc.
In Section~\ref{sec:computations} below, the weight interval will describe a range of possible travel times along a link in a traffic network.
We explicitly allow for the case $\lambda_{uv}=\mu_{uv}$, i.e., the arc $(u,v)$ may be equipped with a constant weight.
Assuming that there are precisely $k$ arcs with nonconstant weights, we can identify those arcs with the variables, for which we also use the notation $x_{uv}$.
Conversely, we also write \enquote{$\lambda(x_i)$} for the given lower bound on $x_i$ and \enquote{$\mu(x_i)$} for the given upper bound.
Setting the coefficients of $D$ to
\[
  d_{uv} \ = \
  \begin{cases}
    x_{uv} & \text{if } \lambda_{uv}<\mu_{uv} \\
    \lambda_{uv} & \text{otherwise} \enspace ,
  \end{cases}
\]
we arrive at the case of separated variables.
So Theorem~\ref{thm:floyd-warshall} applies, but here we restrict the feasible domain to the polyhedron $[\lambda(x_1),\mu(x_1)]\times\dots\times[\lambda(x_k),\mu(x_k)]$ in $\RR^k$.
That polyhedron is compact, if and only if all upper bounds are finite.

From now on we will compare two tropical polynomials $f,g\in\TT[x_1,\ldots,x_k]$ with respect to this polyhedron, i.e., we let $f\leq g$ if $f(z)\leq g(z)$ for all $z\in[\lambda(x_1),\mu(x_1)]\times\dots\times[\lambda(x_k),\mu(x_k)]$.
Whenever the tropical polynomials $f$ and $g$ are sums of the arc weights on paths with separated variables, then checking $f\leq g$ can be done with a single evaluation of each of the polynomials: that is, in this case $f\leq g$ if and only if $f(\mu)\leq g(\lambda)$ where $\lambda$ and $\mu$ are the minimum and the maximum, respectively, taken over all variable bounds involved.
If not further specified then $\lambda(x_i) = 0$ and $\mu(x_i)=\infty$.

A compact way to represent the set of shortest paths to a single target is a shortest-path tree.
A shortest-path tree is the result of Dijkstra's algorithm when all weights are constant.
Motivated by \cite[Theorem~7.3]{Tarjan:1983}, we extend the notion of shortest-path trees to the partial ordering $\leq$.
We call a spanning tree $T$ with all edges directed toward the target node $t$ a \emph{shortest-path tree} if, for every arc $(v,w)$, 
\begin{equation}\label{eq:shortest-path-tree}
	d_{vw}+p_w \ \not< \ p_v \enspace ,
\end{equation}
where $p_v$ is the length of the path from $v$ to $t$ in the tree $T$.
Often we will denote such a directed spanning tree as a pair $(T,p)$ in order to stress that all subsequent complexity bounds require the function $p$ to be given explicitly.
A direct consequence of \eqref{eq:shortest-path-tree} is:
\begin{observation}\label{obs:number_ineq}
  A given directed spanning tree $(T,p)$ has $n-1$ arcs out of the $m$ arcs of~$\Gamma$. Thus there are at most $m-n+1$ arcs $(u,v)$ such that $d_{vw}+p_w$ and $p_v$ are incomparable.
  In particular, it can be tested in $O(k\cdot m)$ time whether a directed spanning tree is a shortest-path tree.
\end{observation}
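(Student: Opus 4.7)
The plan is in three steps. First, since $T$ is a spanning tree on $n$ nodes it has exactly $n-1$ arcs, leaving $m-(n-1)=m-n+1$ arcs of $\Gamma$ outside of $T$. For any arc $(v,w)$ that does lie in $T$, the unique directed $v$-to-$t$ path in $T$ begins with $(v,w)$ and continues with the $w$-to-$t$ subpath, so $p_v=d_{vw}+p_w$ holds by the very definition of $p$; in particular these two tropical polynomials are equal, hence comparable, and the inequality \eqref{eq:shortest-path-tree} is satisfied trivially. Consequently every arc for which $d_{vw}+p_w$ and $p_v$ can possibly be incomparable must lie among the $m-n+1$ non-tree arcs, which gives the first claim.

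For the running-time bound I would first argue that, under the standing separated-variables assumption, each $p_v$ is a sum of arc weights along the (simple) directed path from $v$ to $t$ in $T$, and is therefore a single tropical monomial of the form $c+\sum_{i\in S}x_i$ with $S\subseteq\{1,\dots,k\}$. Likewise $d_{vw}+p_w$ is a tropical monomial; crucially, in the in-tree $T$ every node has a unique outgoing arc, so the $w$-to-$t$ path in $T$ does not traverse $(v,w)$. Therefore the variable attached to $(v,w)$, if any, does not occur in $p_w$, and $d_{vw}+p_w$ is again a sum with separated variables.

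Next I would invoke the remark immediately preceding the statement: whenever two tropical polynomials are sums of arc weights on paths with separated variables, their comparison in the partial ordering \eqref{eq:partial-ordering}, restricted to the feasible box $[\lambda(x_1),\mu(x_1)]\times\cdots\times[\lambda(x_k),\mu(x_k)]$, reduces to a single pair of evaluations at the appropriate corner of the box and thus takes $O(k)$ time. The test $d_{vw}+p_w\not< p_v$ in turn reduces to one such comparison together with a check for monomial equality, both of which cost $O(k)$. Performing this test for each of the $m$ arcs of $\Gamma$ therefore yields the overall bound of $O(k\cdot m)$ claimed in the observation.

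The only delicate point I anticipate is the verification that $d_{vw}+p_w$ really is a monomial with separated variables, because this is precisely what allows the remark's single-evaluation criterion to apply. That point relies on the spanning-tree structure of $T$ (no node has two outgoing tree arcs, so a non-tree arc $(v,w)$ cannot appear in the $w$-to-$t$ tree path), and so this step is where the assumption that $T$ is a bona fide directed spanning tree, rather than an arbitrary subgraph, is used essentially.
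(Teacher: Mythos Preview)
Your argument is correct and is exactly the elaboration the paper has in mind; the paper itself gives no separate proof but introduces the observation as ``a direct consequence of \eqref{eq:shortest-path-tree}'', relying implicitly on precisely the two facts you spell out (tree arcs satisfy $p_v=d_{vw}+p_w$ automatically, and the preceding remark reduces each comparison of separated-variable monomials to an $O(k)$ evaluation). Your careful check that $d_{vw}+p_w$ retains separated variables is a nice clarification that the paper leaves tacit.
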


The solution to computing shortest-path trees toward the node $t$ in a directed graph with $n$ nodes and weighted adjacency matrix $D\in\TT[x_1,\dots,x_k]$ is a polyhedral decomposition $\cS$ of $\RR^k$ induced by up to $n-1$ tropical hypersurfaces corresponding to the nonconstant coefficients in the column labeled $t$ in the Kleene star $D^*$.
Note that all diagonal entries are zero as there are no negative cycles.
On each polyhedral cell the lengths of all shortest paths are linear functions in the $k$ parameters.
Each such cell is a union of cells of the subdivision induced by the tropical hypersurface~$\cT(D^*)$.

\begin{lemma} 
  Every shortest-path tree $(T,p)$ gives rise to a polyhedral cell in the decomposition~$\cS$.
  That cell is described by the inequalities
  \begin{equation}\label{eq:shortest-path-cell}
	  p_v \ \leq \ d_{vw}+p_w \quad \text{for all arcs } (v,w) \enspace. 
  \end{equation}
  Every region of $D$ arises in this way.
\end{lemma}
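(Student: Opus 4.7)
The plan is to prove both directions of the claimed correspondence between shortest-path trees $(T,p)$ and maximal cells of the decomposition $\cS$. For the forward direction I interpret \eqref{eq:shortest-path-cell} pointwise in $\RR^k$: given $(T,p)$, let $R_{(T,p)}$ denote the polyhedron consisting of all $z$ with $p_v(z) \leq d_{vw}(z) + p_w(z)$ for every arc $(v,w)$ of $\Gamma$. The main work is to show that on $R_{(T,p)}$ the $t$-column of $D^*$ reduces to $p$, i.e.\ $d^*_{vt}(z) = p_v(z)$ for all $v$. For the reverse direction I take a maximal cell $R$ of $\cS$ and recover the tree from the monomials attaining the minimum inside $R$.

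For the forward step I would argue by telescoping. Fix $z \in R_{(T,p)}$ and any simple directed path $Q = (v_0,v_1,\dots,v_\ell=t)$ in $\Gamma$. Applying the defining inequality at each arc of $Q$ and summing gives
\[
  p_{v_0}(z) \ \leq \ \sum_{i=0}^{\ell-1} d_{v_i v_{i+1}}(z) + p_{v_\ell}(z) \ = \ w(Q)(z),
\]
since $p_t \equiv 0$. Because the tree path in $T$ from $v_0$ to $t$ realizes the value $p_{v_0}(z)$ as the weight of an actual path, this forces $d^*_{v_0 t}(z) = p_{v_0}(z)$. Hence $R_{(T,p)}$ is contained in the cell of $\cS$ on which the minimum of every nonconstant $d^*_{vt}$ is attained by the monomial $p_v$. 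The converse inclusion is immediate: on that cell one has $p_v(z) = d^*_{vt}(z) \leq d_{vw}(z) + d^*_{wt}(z) = d_{vw}(z) + p_w(z)$ by Bellman's principle, so \eqref{eq:shortest-path-cell} holds and $R_{(T,p)}$ equals a maximal cell of $\cS$.

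For the reverse direction I would fix a maximal cell $R$ of $\cS$, so $\dim R = k$ and in $\mathrm{relint}(R)$ each nonconstant $d^*_{vt}$ has a unique minimizing monomial coming from some simple directed path $P_v$ from $v$ to $t$. Writing $p_v$ for the weight of $P_v$, I need the family $\{P_v\}_v$ to fit together into a single spanning tree $T$ directed toward $t$. This follows from subpath optimality: if $P_v$ starts with arc $(v,w)$ and its suffix from $w$ differs from $P_w$, then replacing the suffix by $P_w$ yields a path of weight $d_{vw} + p_w$ that is strictly smaller than $p_v$ on the interior of $R$, contradicting that $p_v$ minimizes $d^*_{vt}$ there. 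Uniqueness of the minimizer then forces the suffix to equal $P_w$, and the collection $\{P_v\}$ assembles into $T$. The inequalities \eqref{eq:shortest-path-cell} hold on $R$ because for every arc $(v,w)$ the quantity $d_{vw} + p_w$ is the weight of the admissible path $v \to w$ followed by $P_w$, so it bounds $p_v = d^*_{vt}$ from above; thus $(T,p)$ is a shortest-path tree in the sense of \eqref{eq:shortest-path-tree} and $R = R_{(T,p)}$.

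The main obstacle will be making the reverse direction fully rigorous. The telescoping in the forward direction is mechanical, and the converse inclusion relies only on Bellman's principle. By contrast, extracting the tree $T$ from a region requires that the minimizing monomials $p_v$ genuinely form a consistent family of paths; the argument hinges on using the maximality of the cell (to obtain unique minimizers in the interior) together with simplicity of the paths $P_v$ (so no cycles can occur) and subpath optimality (to glue successive paths together). A minor bookkeeping point is the handling of constant columns, where $d^*_{vt}$ carries no parameter and the corresponding tree edge is forced; this does not affect the polyhedral description but has to be excluded from the count of hypersurfaces defining $\cS$.
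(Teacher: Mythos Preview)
Your forward telescoping argument is fine and matches the paper's reasoning, but your conclusion that $R_{(T,p)}$ is a \emph{maximal} cell of $\cS$ is too strong. What your argument actually establishes is $R_{(T,p)} = \{z : d^*_{vt}(z) = p_v(z)\text{ for all }v\}$, which is a cell of $\cS$ but possibly of lower dimension or even empty. The paper makes exactly this weaker claim (``a polyhedral cell'', not ``a region'') and immediately follows the lemma with Example~\ref{ex:infeasible} exhibiting a shortest-path tree whose system \eqref{eq:shortest-path-cell} is infeasible. So the first sentence of your plan (``correspondence between shortest-path trees and maximal cells'') already misreads the statement.

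Your reverse direction has a genuine gap. Uniqueness of the minimizing \emph{monomial} in the interior of $R$ does not give uniqueness of the \emph{path}: two distinct simple $w$--$t$ paths can have identical weight as elements of $\TT[x_1,\dots,x_k]$. In your subpath-optimality step you deduce $p_v = d_{vw} + p_w$ on $R$ (correct), and from monomial uniqueness you can conclude that the suffix $Q$ of $P_v$ satisfies $\mathrm{weight}(Q) = p_w$ as polynomials; but this does not force $Q = P_w$, so the paths need not assemble into a tree. One can repair this by working with a successor map and ruling out cycles, but the paper avoids the issue entirely: it simply picks a point $x$ in the region, runs ordinary Dijkstra there to obtain a shortest-path tree $T_x$, and observes that $x \in P(T_x)$. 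Combined with the forward direction (each $P(T)$ is a cell of $\cS$), this immediately yields that every region is of the form $P(T)$. That one-line covering argument is both shorter and more robust than your explicit tree reconstruction.
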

\begin{proof}
  The inequalities \eqref{eq:shortest-path-cell} are linear, and thus they define a polyhedron $P=P(T)$.
  For every nonnegative point $x\in\TT^k$ Dijkstra's algorithm produces a shortest-path tree, $T_x$, and we have $x\in P(T_x)$.
  As a consequence these polyhedra cover the feasible domain.

  The terms $p_v$ and $d_{vw}+p_w$ appear in the entry $d^{(n)}_{v,t}$ of the Kleene star $D^*$.
  Thus each cell of $\cS$ is either contained in $P$, or they are disjoint.
  On the other hand, if $x\in P(T)$ then every path in $T$ has to be a shortest path after substitution of the variables.
  In other words, if $q$ is a term of $d^{(n)}_{v,t}$ which is minimal for $x$ then $p_v$ and $q$ evaluate to the same value at $x$.
  This implies that $P$ is contained in a cell of $\cS$.
  We conclude that $P\in\cS$, and every region is of that form.
\end{proof}

Clearly, it is enough to take only those arcs into account for which $p_v$ is incomparable to $d_{vw}+p_w$.
The following example shows that a shortest-path tree $T$ may yield a lower dimensional cell or even the empty set.
\begin{example} \label{ex:infeasible}
  Consider the directed graph on four nodes shown in Figure~\ref{fig:infeasible} whose weights lie in the semiring $\TT[x]$ of univariate tropical polynomials.
  The Kleene star of its weighted adjacency matrix is
  \begin{equation}\label{eq:infeasible}
    D^* \ = \ \begin{pmatrix}
      0 & \infty & \infty & \infty \\
      x & 0 & \infty & \infty \\
      \min( 2+x, 5 ) & 2 & 0 & \infty \\
      \min( 3+x, 4 ) & 3 & \infty & 0
    \end{pmatrix} \enspace .
  \end{equation}
  The first column of $D^*$ yields four shortest-path trees with node $a$ as the target.
  The four corresponding systems of inequalities read
  \[
    x \leq 1 \ , \quad 1 \leq x \leq 3 \ , \quad 3 \leq x \ , \quad x \leq 1 \text{ and } 3\leq x \enspace ;
  \]
  where the final system is infeasible.
  That is, there are only three regions.
\end{example}
\begin{figure}[t]
  \centering
  \begin{tikzpicture}[scale = 1.0,
                    color = {black}]


  \tikzset{->-/.style={decoration={
  markings,
  mark=at position #1 with {\arrow{>}}},postaction={decorate}}}
  \tikzstyle{linestyle} = [->-=0.8,> = stealth, ultra thick, line cap=round, line join=round];
  \tikzstyle{linestyle2} = [gray, ->-=0.8,> = stealth, thick, line cap=round, line join=round];

  \coordinate (m1) at ($( 0,-2)$);
  \coordinate (m2) at ($( 0, 0)$);
  \coordinate (m3) at ($(-1.5, 1.5)$);
  \coordinate (m4) at ($( 1.5, 1.5)$);

 \draw[linestyle] (m2) to (m1);
 \node [circle,fill=white,inner sep=1pt] at ($0.5*(m2)+0.5*(m1)$){$x$};
 \draw[linestyle2] (m3) to (m2);
 \node [circle,fill=white,inner sep=1pt] at ($0.5*(m3)+0.5*(m2)$){$2$};
 \draw[linestyle] (m4) to (m2);
 \node [circle,fill=white,inner sep=1pt] at ($0.5*(m4)+0.5*(m2)$){$3$};
 \draw[linestyle] (m3) to[bend right] (m1);
 \node [circle,fill=white,inner sep=1pt] at ($0.5*(m3)+0.5*(m1)-(.45,.45)$){$5$};
 \draw[linestyle2] (m4) to[bend left] (m1);
 \node [circle,fill=white,inner sep=1pt] at ($0.5*(m4)+0.5*(m1)+(.45,-.45)$){$4$};

   \node [circle,draw,fill=white] at (m1){$a$};
   \node [circle,draw,fill=white] at (m2){$b$};
   \node [circle,draw,fill=white] at (m3){$c$};
   \node [circle,draw,fill=white] at (m4){$d$};

\node [fill=white,inner sep=1pt] at ($(-1.5,-2.5)$){\tiny $5\leq 2+x$};
\node [fill=white,inner sep=1pt] at ($( 1.5,-2.5)$){\tiny $3+x\leq 4$};

\end{tikzpicture}
  \caption{The shortest-path tree in the directed graph of Example~\ref{ex:infeasible} that does not correspond to a feasible region.}
  \label{fig:infeasible}
\end{figure}

\begin{remark}\label{rem:infeasible}
  In Example~\ref{ex:infeasible} the single arc of variable weight is the only outgoing arc of the node $b$ and ends at the target node $a$.
  More general, suppose $G$ is a simple directed graph with $n$ nodes and $k$ arcs of variable weight such that they all end at the target node $t$ and these arcs are the only outgoing arcs of the corresponding nodes.	
  Every node (other than the target node) with an outgoing arc of constant weight has up to $k+1$ incomparable paths to the target.
  This leads to at most $(k+1)^{n-k-1}$ shortest-path trees.
  However, it can be shown that the number of regions of the graph $G$ does not exceed $\tbinom{n-1}{k}$.
  As the $n-k-1$ nodes with at most $k+1$ incomparable shortest path to $t$ correspond to $n-k-1$ tropical linear polynomials in $k$ variables.
  For which it is known that their common refinement has at most $\tbinom{n-1}{k}$ maximal cells. 
  As illustrated in Figure~\ref{fig:infeasible}, there are shortest-path trees that do not correspond to feasible regions.
\end{remark}

\begin{remark}
  Finding the dimension of a polyhedral cell given in terms of linear inequalities can be reduced to solving linear programs; cf.\ \cite[Theorem 6.5.5]{GLS}.
\end{remark}

Our aim is it to enumerate all shortest-path trees, and hence, via solving linear programs, all maximal dimensional polyhedral regions.
We will discuss our choices and other options at the end of this section.
We consider the graph $\graphOfShortestPathTrees=\graphOfShortestPathTrees(D)$ whose nodes are all shortest-path trees, and which has an edge between two nodes if the corresponding trees share $n-2$ common edges, i.e., there is exactly one node $u$ with two outgoing edges, and the two paths from $u$ to the target $t$ are incomparable. 

\begin{remark}\label{rem:dualgraph} 
  The graph $\graphOfShortestPathTrees(D)$ contains the dual graph of the polyhedral subdivision $\cS$ as a connected subgraph.
\end{remark}

A graph traversal enumerates all nodes in the connected component of some first node.
This is the core of our approach, which employs the following two procedures.

\begin{Algorithm}[Find an initial shortest-path tree]\label{alg:startnode}
  Set each unknown $x_i$ to its minimal value $\lambda_i$.
  Run Dijkstra's algorithm to obtain a shortest-path tree, with fixed arc weights, for the target node $t$.
  Let $T$ be this shortest-path tree, equipped with the original weights.
  For each node $u$ this yields a parameterized distance $p^T_u\in\TT[x_1,\ldots,x_k]$ from $u$ to $t$ in~$T$.
\end{Algorithm}

That initial tree $T$ is a first node of the graph $\graphOfShortestPathTrees(D)$.

\begin{Algorithm}[Traversing $\graphOfShortestPathTrees(D)$]\label{alg:traverse}
  We will maintain a queue, $Q$, of pairs of trees and parameterized distances.
  That queue is initialized with a single shortest-path tree (and pairs of parameterized distances to the target node $t$) obtained from Algorithm~\ref{alg:startnode}.

  While $Q$ is nonempty, pick and remove from $Q$ the next tree $T$, together with the parameterized distances $p_v^T$ from $v$ to $t$.
  For every arc $(v,w)$, compare $p_v^T$ with $d_{vw}+p_w^T$. 
  If they are incomparable add $p_v^T \leq d_{vw}+p_w^T$ to a system of inequalities associated with $T$, and replace the outgoing arc of $v$ by $(v,w)$ to obtain a new tree $T'$.
  Compute the new parameterized distances $p^{T'}$, and check whether $T'$ is a shortest-path tree.
  In that case and if additionally $T'$ has not been considered before, add $T'$ to $Q$.
  Output the triplet of the tree $T$, the distance function $p^T$ and the system of inequalities describing the region of $T$, when there is no arc left to compare.
\end{Algorithm}

\begin{remark}
  Algorithm~\ref{alg:traverse} is a breadth first search on $\graphOfShortestPathTrees(D)$; cf.\ \cite[Chapter~1]{Tarjan:1983}.
  The order in which the traversal is organized is not particularly relevant.
  Similarly, the initial shortest-path tree constructed in Algorithm~\ref{alg:startnode} could be replaced by any other shortest-path tree with a non-empty feasible region of parameters.
\end{remark}

Let us now determine bounds on the number $\combinatorialTypes=\combinatorialTypes(D,t)$ of shortest-path trees with target $t$ and weighted adjacency matrix $D$.
That is also the number of nodes in $\graphOfShortestPathTrees(D)$ and a crucial parameter for the complexity of the Algorithm~\ref{alg:traverse}.
We will use two further definitions in our analysis.
We call a variable \emph{active} in a shortest-path tree when it occurs in the weight of one of its arcs.
Moreover, an adjacency matrix is \emph{generic} if no two collections of arcs yield the same sum of weights, and this applies to a variable weight $x_i$ by taking the minimal value $\lambda_i$ and the maximal value $\mu_i$ into account.
Formally, we consider the set of weights
\[
  W \ := \ \SetOf{ d_{vw} }{ v,w\in[n] ,\, d_{vw}\text{ constant}} \; \cup \; \SetOf{ \lambda_i }{ i\in[k] } \; \cup \; \SetOf{ \mu_i }{ i\in[k],\, \mu_i<\infty }
\]
whose elements we require to be pairwise distinct such that, moreover, any two subsets of $W$ (independent of their cardinalities) yield distinct weight sums.
For instance, genericity entails that the shortest-path tree of any connected subgraph (with fixed target) is unique, if it involves only arcs of constant weight.
This will simplify the proofs, but it does not restrict the applicability of our results; see Remark~\ref{rem:generic} below.

The proof of our main result uses the analysis of a special class of directed graphs, where it matters how the arcs with variable weights are distributed.
For this we have a technical lemma, which we split into two parts, to increase the readability.

\begin{lemma}\label{lem:varnodes:ell=0}
  Let $D\in\TT[x_1,\dots,x_{k}]^{n\times n}$ be the weighted adjacency matrix of a directed graph with $n=2k+1$ nodes with the following arcs and weights:
  Each node $u\in[k]$ has a unique outgoing arc, to the node $u+k$, with variable weight $d_{u,u+k}=x_u$.
  Each node $v\in\{k+1,\dots,2k\}$ has arcs to the nodes $w\in [k]\cup\{n\}$ of arbitrary positive constant weight $d_{v,w}\in\RR_{>0}\cup\{\infty\}$.
  There are no other arcs, and there exists at least one path from each node to $n$ whose weight has a finite constant term.
  If $D$ is generic, then $\combinatorialTypes(D,n)\leq k!$.
\end{lemma}
Note that there are no arcs between any two nodes in $[k]$ nor between any two nodes in $\{k+1, \dots, 2k\}$.
The arcs with variable weights form a matching.
\begin{proof}
Without loss of generality, and since $D$ is generic, we may assume that $k+1$ is the unique node for which the weight $d_{k+1,n}$ is minimal among all weight of arcs ending at~$n$.
Then the shortest path from $1$ to $n$ (via $k+1$) is unique, as $d_{k+1,n} < d_{k+u,n}$ implies $d_{k+1,n} < d_{k+1,u} + x_u +d_{k+u,n}$ for $u\in\{2,\dots,k\}$ and $x_u\geq 0$.
Every shortest-path tree $T$ partitions the set of  nodes into two parts, namely those whose shortest path to $n$ passes through $1$ and the others.
Since $u$ for $u\in[k]$ has a unique outgoing arc, the nodes $u$ and $u+k$ are always in the same part.
In this way, the partition of the nodes induced by $T$ is described by a partition of $[k]$ into two parts.
Let $T_1$ be the subtree of $T$ that includes the node $1$ and all nodes $u$, $u+k$ with a shortest path through $1$; this is a shortest-path tree (on that subset of nodes) directed to $1$.
Similarly, the subtree $T'$ of all the other nodes except for $k+1$ is a shortest-path tree directed to $n$.
Combining the arcs in $T_1$ and $T'$ with the arc $(k+1,n)$ yields the arcs of $T$; as $k+1$ is fixed the trees $T_1$ and $T'$ determine $T$.

For a directed graph that satisfies the assumptions of the current Lemma~\ref{lem:varnodes:ell=0}, let $\sigma_k$ denote the maximal number of shortest-path trees directed to the fixed target node $n$.
Now the two subgraphs on nodes of the partition induced by the tree $T$ are of the same shape as in our assumption.
So there is some index $j$ such that $\sigma_j$ counts the shortest-path trees in $T_1$ (with target node $1$), and $\sigma_{k-j-1}$ counts the shortest-path trees in $T'$ (with target node $n$).
There are $\tbinom{k-1}{j}$ ways to choose a subset of $\{2,\dots,k\}$ of size $j$ and hence we arrive at
\[
  \sigma_k \ \leq \ \sum_{j=0}^{k-1} \binom{k-1}{j} \cdot \sigma_j \cdot \sigma_{k-j-1} \enspace.
\]
We have $\sigma_0 = 0! = \sigma_1 = 1! = 1$.
Finally, induction on $k$ gives $\sigma_k\leq k! = \sum_{j=0}^{k-1} \binom{k-1}{j} j! (k-j-1)!$.
\end{proof}

Setting $\ell=0$ in the next result is precisely Lemma~\ref{lem:varnodes:ell=0}, which we just proved.
Before we enter the precise formulation we consider two examples.

\begin{example}
  The bound of Lemma~\ref{lem:varnodes:ell=0} is tight.
  It is attained, e.g., if the arcs $(u+k,v)$ have weight $2^{u-1}+u-2^{v}-v$ for all $u>v$ with $u,v\in[k]$, the arcs $(u+k,n)$ have weight $2^{u-1}+u-1$ for $u\in[k]$, and there are no other arcs of constant weight.
\end{example}

\begin{example}\label{ex:lemma}
  Figure~\ref{fig:graphlemma} illustrates the conditions of Lemma~\ref{lem:varnodes} below, where $k=3$ and $\ell=2$.
  It shows a directed graph with $9=2\cdot k+\ell+1$ nodes, $5=k+\ell$ variables, and ten arcs of constant weight.
  While the weighted adjacency matrix is not generic, the paths to $t$ have distinct weights.
  The graph in Figure~\ref{fig:graphlemma} has $30 \leq 120 = (k+\ell)! \leq 125 = (k+\ell)^k$ shortest-path trees directed to the node $n=9$.
  If the two nodes $4$ and $5$ are deleted with all their in- and outgoing arcs and nodes are relabeled one gets an example of a graph with parameters $k=3$ and $\ell=0$.
  This new graph has $6=3!$ shortest-path trees directed to $9$. This number agrees with the bound given in Lemma~\ref{lem:varnodes}.
  However, that bound is not tight for large values of $\ell$. 
\end{example}

\begin{figure}[t]\centering
  \begin{tikzpicture}[scale = 0.8,
                    color = {black}]

  \tikzset{->-/.style={decoration={
  markings,
  mark=at position #1 with {\arrow{>}}},postaction={decorate}}}
  \tikzstyle{linestyle} = [->-=0.87,> = stealth, thick, line cap=round, line join=round];
  \tikzstyle{linestyle2} = [->-=0.8,> = stealth, ultra thick, line cap=round, line join=round, black];

  \coordinate (t) at (11.2,0);

  \coordinate (v1) at (3, 2);
  \coordinate (v2) at (3, 0);
  \coordinate (v3) at (3, -2);

  \coordinate (u1) at (9.3, 2);
  \coordinate (u2) at (0,0);

  \coordinate (w1) at (6, 2);
  \coordinate (w2) at (6, 0);
  \coordinate (w3) at (6, -2);

   \draw[linestyle2] (v1) to (w1);
   \node[fill=white,inner sep=1pt] at ($0.55*(v1)+0.45*(w1)$) {$x_1$};
   \draw[linestyle2] (v2) to (w2);
   \node[fill=white,inner sep=1pt] at ($0.55*(v2)+0.45*(w2)$) {$x_2$};
   \draw[linestyle2] (v3) to (w3);
   \node[fill=white,inner sep=1pt] at ($0.55*(v3)+0.45*(w3)$) {$x_3$};

   \draw[linestyle2] (u1) to (t);
   \node[fill=white,inner sep=1pt] at ($0.55*(u1)+0.45*(t)$) {$x_4$};
   \draw[linestyle2] (u2) to (v2);
   \node[fill=white,inner sep=1pt] at ($0.55*(u2)+0.45*(v2)$) {$x_5$};

   \draw[linestyle] [use Hobby shortcut] (w1) .. (9.8, 2.5) .. (t);
   \node[fill=white,inner sep=1pt] at (7.8, 2.9) {$2$};
   \draw[linestyle] (w1) to (u1);
   \node[fill=white,inner sep=1pt] at ($0.5*(w1)+0.5*(u1)$) {$1$};

   \draw[linestyle] (w2) to (t);
   \node[fill=white,inner sep=1pt] at ($0.7*(w2)+0.3*(t)$) {$5$};
   \draw[linestyle] (w2) to (u1);
   \node[fill=white,inner sep=1pt] at ($0.5*(w2)+0.5*(u1)$) {$4$};
   \draw[linestyle] (w2) to (v1);
   \node[fill=white,inner sep=1pt] at ($0.5*(w2)+0.5*(v1)$) {$1$};

   \draw[linestyle] (w3) to[bend right] (t);
   \node[fill=white,inner sep=1pt] at (9,-1.7) {$12$};
   \draw[linestyle] (w3) to[bend right] (u1);
   \node[fill=white,inner sep=1pt] at (7.8,-.95) {$11$};
   \draw[linestyle] [use Hobby shortcut] (w3) .. (2.5,-2.5) .. (u2);
   \node[fill=white,inner sep=1pt] at (1.3, -1.8) {$1$};
   \draw[linestyle] [use Hobby shortcut] (w3) .. (2.2,-2.8) .. (-.6,0) .. (v1);
   \node[fill=white,inner sep=1pt] at (0, 1.5) {$8$};
   \draw[linestyle] (w3) to (v2);
   \node[fill=white,inner sep=1pt] at ($0.5*(w3)+0.5*(v2)$) {$3$};

   \node [circle, ultra thick, draw,fill=white] at (t) {$9$};
   \node [circle, ultra thick, draw,fill=white] at (w1) {$6$};
   \node [circle, ultra thick, draw,fill=white] at (w2) {$7$};
   \node [circle, ultra thick, draw,fill=white] at (w3) {$8$};

   \node [circle, draw, fill=white] at (u1) {$4$};
   \node [circle, draw, fill=white] at (u2) {$5$};

   \node [circle, ultra thick, draw,fill=white] at (v1) {$1$};
   \node [circle, ultra thick, draw,fill=white] at (v2) {$2$};
   \node [circle, ultra thick, draw,fill=white] at (v3) {$3$};

\end{tikzpicture}
  \caption{The directed graph from Example~\ref{ex:lemma}.}
  \label{fig:graphlemma}
\end{figure}

\begin{lemma}\label{lem:varnodes}
  Let $D\in\TT[x_1,\dots,x_{k+\ell}]^{n\times n}$ be the weighted adjacency matrix of a directed graph with $n=2k+\ell+1$ nodes with the following arcs and weights:
  Each node $u\in[k]$ has a unique outgoing arc, to the node $u+k+\ell$, with variable weight $d_{u,u+k+\ell}=x_u$.
  Each node $u\in\{k+1,\dots,k+\ell\}$ has a unique outgoing arc, to some node $w<u$ or $w>k+\ell$, with variable weight $d_{u,w}=x_{u}$.
  Each node $v\in\{k+\ell+1,\dots,2k+\ell\}$ has arcs to the nodes $w\in [k+\ell]\cup\{n\}$ of arbitrary positive constant weight $d_{v,w}\in\RR_{>0}\cup\{\infty\}$.
  There are no other arcs, and there exists at least one path from each node to $n$ whose weight has a finite constant term.
  If $D$ is generic, then $\combinatorialTypes(D,n) \leq \min\{(k+\ell)!, (k+\ell)^k\}$.
\end{lemma}

\begin{proof}
We first prove that $\combinatorialTypes(D,n) \leq(k+\ell)^k$. The successor of each node in $[k+\ell]$ is unique. In a shortest-path tree there is exactly one arc from each of the remaining $k$ nodes (excluding the target node) to one of the nodes in $[k+\ell]\cup\{n\}$ which is not the predecessor. Thus the number of shortest-path trees satisfies $\combinatorialTypes(D,n) \leq (k+\ell)^k$.

Now we show that $\combinatorialTypes(D,n) \leq (k+\ell)!$. Let us relabel the node $n=2k+\ell+1$ by $n=2(k+\ell)+1$, and add $\ell$ new nodes $\{2k+\ell+1,\ldots,2k+2\ell\}$. Furthermore, we replace each arc $(u,v)$ with $u\in\{k+1,\ldots,k+\ell\}$ by the two arcs $(u,u+k+\ell)$ and $(u+k+\ell,v)$ of weight $x_u$ and weight $\epsilon_u$ respectively, for some small positive number $\epsilon_u>0$ such that the resulting graph has a generic adjacency matrix.
This process does not change the number of shortest-path trees. However, this new graph meets the conditions of Lemma~\ref{lem:varnodes:ell=0} with $k+\ell$ variables. Thus $\combinatorialTypes(D,n) \leq (k+\ell)!$.
\end{proof}

We consider the function $\Phi:\NN\times\NN\to\NN$ defined as
\[
  \Phi(n,k) \ := \ \sum_{i = 0}^k \binom{k}{i} (i+1)^{n-i-1} \enspace .
\]
For instance, we have $\Phi(n,0)=1$ and $\Phi(n,1)=1+2^{n-2}$.
Clearly,
\[
  \Phi(n,k) \ \leq \ \sum_{i = 0}^k \binom{k}{i} (k+1)^{n-i-1} \ = \ (k+1)^{n-1} \cdot \Bigl(1+\frac{1}{k+1}\Bigr)^k \ < \ \text{e}\cdot (k+1)^{n-1}\enspace ,
\]
where e is Euler's constant.
\begin{theorem}\label{thm:shortestpathtrees}
  Let $D\in\TT[x_1,\dots,x_k]^{n\times n}$ be the weighted adjacency matrix of a directed graph with $n$ nodes and $m$ arcs.
  Suppose that $D$ is generic and has separated variables (with lower and upper bounds), and let $t\in[n]$ be some node.
  Further, let $\combinatorialTypes=\combinatorialTypes(D,t)$ be the number of shortest-path trees with target node $t$.
  Then
  \[
    \combinatorialTypes \ \leq \ \min\left( \Phi(n,k),\, n^{n-2},\,\binom{m}{n-1} \right) \enspace,
  \]
  and the graph traversal Algorithm~\ref{alg:traverse} computes the shortest-path trees together with an inequality description for each region of $D^*$ in $O(k\cdot m^2\cdot \combinatorialTypes + n^2)$ time.
	The space complexity is bounded by $O(k\cdot (n+m) \cdot \combinatorialTypes)$.
\end{theorem}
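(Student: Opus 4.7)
My approach is to treat the enumeration bound for $\combinatorialTypes=\combinatorialTypes(D,t)$ and the complexity of Algorithm~\ref{alg:traverse} separately.

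For the enumeration bounds, the estimates $n^{n-2}$ and $\binom{m}{n-1}$ are immediate: every shortest-path tree is a labeled spanning arborescence of $\Gamma$ rooted at $t$, so Cayley's formula and the count of $(n-1)$-subsets of the $m$ arcs yield the two bounds respectively. For the refined bound $\Phi(n,k)$, I would stratify the shortest-path trees according to the subset $S\subseteq\{x_1,\dots,x_k\}$ of variables that appear in them. Fixing $|S|=i$, the $i$ variable arcs determine the outgoing arc at their $i$ source nodes, leaving $n-1-i$ free non-target nodes. At any such free node $v$, a $v$-to-$t$ path in a tree with active set $S$ is a sequence of constant arcs possibly followed by the arc in which the path first meets an active variable; this partitions the outgoing choices at $v$ into $i+1$ comparability classes (one per element of $S$ plus a purely constant class). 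Extending the argument of Lemma~\ref{lem:freenodes} and invoking genericity, each class contributes at most one feasible representative, giving at most $(i+1)^{n-1-i}$ trees per active set $S$. Summing $\binom{k}{i}(i+1)^{n-1-i}$ over $i=0,\dots,k$ produces $\Phi(n,k)$.

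For the time complexity, Algorithm~\ref{alg:startnode} is ordinary Dijkstra on the instance with all variables set to their lower bounds and runs in $O(n^2)$. Algorithm~\ref{alg:traverse} then visits at most $\combinatorialTypes$ trees. At each tree $T$ we iterate over the $m$ arcs $(v,w)$ and compare $p_v^T$ with $d_{vw}+p_w^T$; both are tropical monomials in at most $k$ variables by the separated-variables assumption, and their comparability over the box $\prod_i[\lambda(x_i),\mu(x_i)]$ is decided by two evaluations at corner points (as noted after \eqref{eq:partial-ordering} in the setup), each costing $O(k)$. Each candidate swap produces a tree $T'$ whose parameterized distances differ from $p^T$ only along the affected ancestral path to $t$, updatable in $O(n)$, after which Observation~\ref{obs:number_ineq} verifies the shortest-path property in $O(km)$. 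A hash table of canonical tree encodings suppresses duplicates. Hence the per-tree cost is $O(km^2)$, and together with the initial $O(n^2)$ this gives the stated $O(km^2\combinatorialTypes+n^2)$ bound.

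For the space complexity, each tree stores its $n-1$ arc pointers ($O(n)$), the parameterized distance at every node (each a monomial of size $O(k)$, so $O(nk)$ total), and an inequality description of its region comprising at most $m-n+1$ constraints of width $O(k)$ by Observation~\ref{obs:number_ineq} ($O(km)$ total). Summed over all $\combinatorialTypes$ trees, this yields $O(k(n+m)\combinatorialTypes)$. The main obstacle I anticipate is the refined bound $\Phi(n,k)$: rigorously justifying the collapse of comparability classes at each free node requires combining the separated-variables hypothesis with genericity to rule out double counting of trees across different active sets, whereas the two coarser enumeration bounds and the complexity analysis are comparatively routine.
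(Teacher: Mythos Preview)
Your overall strategy matches the paper's: both stratify shortest-path trees by the set $S$ of active variables to obtain the $\Phi(n,k)$ bound, and both carry out essentially the same per-tree accounting for the time and space complexity. The coarser bounds $n^{n-2}$ and $\binom{m}{n-1}$, as well as the complexity analysis, are handled the same way in both and are routine.

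The substantive difference is in how the per-active-set bound $(i+1)^{n-1-i}$ is justified. The paper does \emph{not} apply a Lemma~\ref{lem:freenodes}-style argument directly to all $n-1-i$ free nodes. Instead it first contracts the constant arcs whose start node has no incoming variable arc, reducing the tree to one on $2i-\ell+1$ nodes (where $\ell$ counts variable arcs whose endpoint is either another variable source or $t$); this contracted tree fits the hypotheses of Lemma~\ref{lem:varnodes} and contributes a factor $i!/\ell!$. Only after undoing the contraction does the paper invoke the Lemma~\ref{lem:freenodes} mechanism on the remaining $n-2i+\ell-1$ nodes, yielding $(i+1)^{n-2i+\ell-1}$, and then checks $(i+1)^{n-2i+\ell-1}\cdot i!/\ell!\le (i+1)^{n-i-1}$.

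Your one-step argument has a gap precisely where this two-lemma decomposition does work. In Lemma~\ref{lem:freenodes} the path from each variable source $u_j$ to $t$ is \emph{fixed} (it is the single arc $u_j\to t$), so for a free node $v$ the $i+1$ ``first variable hit'' classes genuinely are comparability classes of paths in the graph. In the general setting the path from $u_j$ onward is itself tree-dependent: two trees with the same active set $S$ can give different continuations $p_{v_j}^T$ and $p_{v_j}^{T'}$, so two $v$-to-$t$ paths that both ``first hit $u_j$'' need not be comparable. Hence ``$i+1$ comparability classes at each free node'' does not, as stated, collapse to $(i+1)^{n-1-i}$ trees without an additional argument controlling how the variable arcs chain---which is exactly what the contraction step and Lemma~\ref{lem:varnodes} supply. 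Your self-identified obstacle (double counting across different active sets) is not the issue; the subtlety lies entirely \emph{within} a fixed active set.
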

\begin{proof}
  Since each spanning tree in a graph with $n$ nodes has only $n-1$ edges, there are at most $\tbinom{m}{n-1}$ (shortest-path) trees in the graph defined by $D$.
  
  Next let us discuss the extremal case $k=n^2-n$.
  Then we have as many variables as possible, say, with weight intervals $[0,\infty]$, and the graph defined by $D$ is $\widetilde{K}_n$, the complete directed graph on $n$ nodes.
  In this case any two arcs and paths are incomparable, and thus all labeled spanning trees of the undirected graph are produced as output.
  By Cayley's formula the complete undirected graph $K_n$ has precisely $n^{n-2}$ labeled spanning trees.
  Note that fixing the target node $t$ in an undirected spanning tree amounts to picking $t$ as the root and directing all edges toward it.
  Since increasing the number of variables cannot decrease the number of shortest-path trees we obtain the second inequality $\combinatorialTypes\leq n^{n-2}$.

  Now we will look into the general case.
  We want to count the number of shortest-path trees toward $t$ with exactly $i$ active variables.
  Fix a set of $i$ variables, which amounts to fixing a set, $A$, of $i$ arcs, as we have separated variables.
  Furthermore, let $\ell$ denote the number of arcs in $A$ whose end point has an outgoing arc in $A$ or is the target node $t$. 
  Now pick some directed spanning tree, $T$, with target $t$ that includes the $i$ arcs in $A$ and no other arc with variable weight.
  Note that such a tree might not exist, whence we may overestimate the number of shortest-path trees.

  In the next step we contract the arcs in $T$ with constant weight whose start point has no incoming arc of variable weight.
  Here we add the weight of a contracted arc $(u,v)$ to the weight of each in-arc of $u$. 
  In this way we keep exactly $i-\ell$ of the arcs with constant weight.
  We arrive at a directed tree with $2i-\ell+1$ nodes and $2i-\ell=(i-\ell)+i$ arcs.
  By construction, this tree is a subgraph of a graph like in Lemma~\ref{lem:varnodes} with parameters $k=i-\ell$ and $\ell$. 
  In fact, the parameter $i-\ell$ overestimates what is called $k$ in Lemma~\ref{lem:varnodes} whenever two arcs with with variable weight point to the same node and its outgoing arc is constant.
  At any rate, there are at most $i^{i-\ell}$ of these trees.

  Undoing the contraction of the constant arcs we obtain shortest-path trees with at most $n-2i+\ell-1$ additional nodes. 
  From any of these additional nodes a shortest path to $t$ either is of constant length or uses an arc with variable weight.
  In the latter case there are $i$ choices for the first arc taken with variable weight while constant shortest paths are unique as $D$ is generic.
  Hence there are at most $(i+1)^{n-2i+\ell-1}$ choices and in total at most $(i+1)^{n-2i+\ell-1}\cdot i^{i-\ell}$ shortest-path trees for a fixed set of $i$ active variables.
  Clearly $i\leq i+1$ and hence $(i+1)^{n-2i+\ell-1}\cdot i^{i-\ell}\leq (i+1)^{n-i-1}$. 
  Since there are $\tbinom{k}{i}$ such sets we conclude that the total number of shortest-path trees satisfies also the final inequality $\combinatorialTypes\leq \Phi(n,k)$.
  
  Now let us estimate the complexity of Algorithm~\ref{alg:traverse} in terms of the number of variables~$k$, the number of nodes~$n$, the number of arcs~$m$ and the number of shortest-path trees~$\combinatorialTypes$.

  The initial step is to compute a shortest-path tree, $T$, with Algorithm~\ref{alg:startnode}.
  This means, first, to create an $n\times n$ adjacency matrix with constant weights, second, to apply Dijkstra's algorithm, and, third, to find the inequality description of the feasible region for $T$.
  That takes $O(n^2)$ for the first two steps and $O(k\cdot n)$ for the third, adding up to $O(k\cdot n+n^2)$.

  The queue $Q$ of the Algorithm~\ref{alg:traverse} treats every shortest-path tree at most once. 
  It follows from Observation~\ref{obs:number_ineq} that such a tree $T$ has at most $m$ arcs that lead to an inequality of the region of $T$, and hence at most $m$ potential neighbors in $\graphOfShortestPathTrees(D)$.
  It takes $O(k\cdot n)$ to update the distances $p^{T'}$ and $O(k\cdot m)$ to check whether $T'$ is a shortest-path tree.
  Thus in total the time complexity of the Algorithm~\ref{alg:traverse} is at most $O(k\cdot n+n^2+k\cdot m\cdot \combinatorialTypes\cdot (n+m))=O(n^2+k\cdot m^2\cdot\combinatorialTypes)$, as $n-1\leq m$.

  In terms of space complexity the Algorithm~\ref{alg:traverse} is a typical breadth-first search: the dominating contribution comes from the queue $Q$, whose length is bounded by $\combinatorialTypes$.
  Each entry of the queue contains a tree, on $n$ nodes, the distances to the target, and the list of constraints of the corresponding region.
  The distance function has $n$ entries, and the at most $m$ edges give the linear constraint in the $k$ indeterminates.
  We infer that the total space complexity is of order at most $O(k\cdot (n+m) \cdot \combinatorialTypes)$, in the unit cost model.
\end{proof}

\begin{figure}[t]\centering
  \begin{tikzpicture}[scale = 0.8,
                    color = {black}]

  \tikzset{->-/.style={decoration={
        markings,
        mark=at position #1 with {\arrow{>}}},postaction={decorate}}}
  \tikzstyle{linestyle} = [->-=0.87,> = stealth, thick, line cap=round, line join=round, dashed];
  \tikzstyle{tree} = [->-=0.87,> = stealth, thick, line cap=round, line join=round];
  \tikzstyle{active} = [->-=0.87,> = stealth, ultra thick, line cap=round, line join=round];
  \tikzstyle{inactive} = [->-=0.87,> = stealth, ultra thick, line cap=round, line join=round, dashed];

  \coordinate (t) at (10,0);

  \coordinate (v1) at (0,2);
  \coordinate (v2) at (0,-2);
  \coordinate (v3) at (4,2);
  \coordinate (v4) at (4,-2);
  \coordinate (v5) at (8,2);
  \coordinate (v6) at (8,-2);

  \draw[tree] (v1) to (v3);
  \draw[linestyle] (v1) to (v4);
  \draw[linestyle] (v2) to (v3);
  \draw[active] (v2) to (v4);
  \draw[active] (v3) to (v5);
  \draw[linestyle] (v3) to (v6);
  \draw[inactive] (v4) to (v5);
  \draw[tree] (v4) to (v6);
  \draw[active] (v5) to (t);
  \draw[tree] (v6) to (t);

  \foreach \i in {1,...,6}{
    \node [circle, ultra thick, draw,fill=white] at (v\i) {$\i$};
  }
  \node [circle, ultra thick, draw,fill=white] at (t) {$t$};

\end{tikzpicture}

  \caption{The directed graph from Example~\ref{ex:theorem}. Variable arcs are bold, non-tree arcs are dashed.}
  \label{fig:graphtheorem}
\end{figure}

\begin{corollary}
  Let $D$ be a generic weighted adjacency matrix with separated variables.
  Algorithm~\ref{alg:traverse} enumerates all shortest-path trees of $D$ to a fixed node, together with their distance functions and inequality description of their polyhedral cell, that are path-connected in $\graphOfShortestPathTrees(D)$ to a shortest-path tree of a feasible region. In particular, it enumerates a shortest-path tree for every feasible region.  
\end{corollary}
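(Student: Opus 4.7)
The plan is to verify the correctness of Algorithm~\ref{alg:traverse} in three stages: (i) the initial tree produced by Algorithm~\ref{alg:startnode} corresponds to a feasible region of $\cS$; (ii) Algorithm~\ref{alg:traverse} is a faithful breadth-first search of the connected component of $\graphOfShortestPathTrees(D)$ containing that initial tree; and (iii) by Remark~\ref{rem:dualgraph} this component visits every feasible region.

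First, I would argue that the tree $T$ returned by Algorithm~\ref{alg:startnode} lies in a feasible region. Fixing each variable $x_i$ to its lower bound $\lambda_i$ turns $D$ into an ordinary real weighted adjacency matrix, and Dijkstra's algorithm correctly returns a shortest-path tree $T$ toward $t$. Because $D$ is generic in the sense defined before Lemma~\ref{lem:varnodes}, no two collections of arcs have the same total weight when the variables are set to $\lambda$, so $T$ is the \emph{unique} shortest-path tree at the point $\lambda=(\lambda_1,\dots,\lambda_k)$. By the preceding lemma, the polyhedron $P(T)$ described by the inequalities \eqref{eq:shortest-path-cell} contains $\lambda$ and lies in $\cS$; uniqueness forces $P(T)$ to be full-dimensional within the feasible domain, hence a feasible region.

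Second, I would check that the transitions explored in Algorithm~\ref{alg:traverse} match exactly the edges of $\graphOfShortestPathTrees(D)$. The algorithm produces a neighbor $T'$ of $T$ only when it swaps the outgoing arc at a single node $v$ for an arc $(v,w)$ along which $p_v^T$ and $d_{vw}+p_w^T$ are incomparable, and then verifies via Observation~\ref{obs:number_ineq} that $T'$ is itself a shortest-path tree. This is precisely the adjacency relation in $\graphOfShortestPathTrees(D)$: two trees share $n-2$ arcs and differ in one node whose two candidate paths to $t$ are incomparable. Combined with the standard duplicate-detection of BFS (the \enquote{has not been considered before} check), Algorithm~\ref{alg:traverse} enumerates exactly the connected component of $\graphOfShortestPathTrees(D)$ containing the initial tree, together with the data $(T,p^T,\text{inequalities})$ for each visited node.

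Finally, by Remark~\ref{rem:dualgraph} the dual graph of the subdivision $\cS$ is a \emph{connected} subgraph of $\graphOfShortestPathTrees(D)$, and by the preceding lemma every feasible region of $\cS$ arises as $P(T)$ for some shortest-path tree. Since step~(i) places the initial tree on a node of this dual subgraph, the connected component enumerated in step~(ii) contains the whole dual graph of $\cS$, and therefore at least one shortest-path tree per feasible region; this yields the \enquote{in particular} clause. The main obstacle, and the only place where genericity is genuinely used, is step~(i): one must rule out that the evaluation point $\lambda$ sits on a lower-dimensional face of $\cS$, as otherwise the BFS could start at a tree whose cell is not a region and might fail to enter the dual graph of $\cS$.
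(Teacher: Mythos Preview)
Your proof is correct and follows essentially the same three-step approach as the paper's: the initial tree from Algorithm~\ref{alg:startnode} corresponds to a region, Algorithm~\ref{alg:traverse} is a breadth-first search on $\graphOfShortestPathTrees(D)$, and Remark~\ref{rem:dualgraph} then places every feasible region in the enumerated component. Your treatment is more explicit than the paper's---in particular, you spell out how genericity guarantees that the evaluation point $\lambda$ avoids the lower-dimensional faces of $\cS$, a point the paper simply asserts.
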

\begin{proof}
Every node of $\graphOfShortestPathTrees(D)$ is a shortest-path tree.
Thus, by definition of an edge in $\graphOfShortestPathTrees(D)$ every neighbor that is unseen will be added to the queue of Algorithm~\ref{alg:traverse} and hence be visited and enumerated.
The shortest-path tree computed in Algorithm~\ref{alg:startnode} corresponds to a region, and by Remark~\ref{rem:dualgraph}, every shortest-path tree of a region is in the same connected component.
\end{proof}

\begin{example}\label{ex:theorem}
  To illustrate the generic case in the proof of Theorem~\ref{thm:shortestpathtrees} we consider the directed graph with seven nodes shown in Figure~\ref{fig:graphtheorem}, where we omit the weights.
  We use the same notation as in the proof.
  The arcs $(1,3)$, $(2,4)$, $(3,5)$, $(4,6)$, $(5,t)$ and $(6,t)$ form the directed spanning tree~$T$.
  The set of arcs with active variables is $A=\{(2,4),(3,5),(5,t)\}$, i.e., $i=3$.
  The only arc with a nonactive variable is $(4,5)$.
  The constant arcs $(1,3)$ and $(6,t)$ are contracted, and $\ell=2$.
  The weight of $(6,t)$ is added to both, $(3,6)$ and $(4,6)$ which now point to the target node; the weight of $(1,3)$ is irrelevant, since the in-degree of $1$ is zero.
\end{example}

\begin{remark}\label{rem:generic}
  The assumption concerning the genericity of the arc weights in Theorem~\ref{thm:shortestpathtrees} is not essential.
  In fact, via picking a fixed total ordering of the arcs one can break ties between two candidates of shortest paths by comparing them lexicographically.
  This technique is known as \enquote{symbolic perturbation} and implemented in our \polymake code; see also Section~\ref{sec:computations}.
\end{remark}

\begin{remark}
  Theorem~\ref{thm:shortestpathtrees} could also be proved without Lemma~\ref{lem:varnodes}.
  However, the current proof additionally shows that the bound given is not tight whenever $\ell>0$.
  Moreover, that argument also allows to estimate the number of feasible regions in Proposition~\ref{prop:feasible}.
\end{remark}

\begin{remark}
  Two variables cannot simultaneously be active if they share the same initial vertex.
  This situation occurs, e.g., when $k\geq n$; hence $\Phi(k,n)$ overestimates the number of shortest-path trees in that range.
  The function $\Phi(k,n)$ is the sum over the maximal number of shortest-path trees with $i\leq k$ active variables.
  These maxima cannot be attained simultaneously if $k\geq 2$; thus $\Phi(k,n)$ overestimates the number of shortest-path trees also for $k\geq 2$.
  Clearly, our bound is tight for (directed) trees, the complete directed graph $\widetilde K_n$ on $n$ nodes with $k=n^2-n$ variables, and on graphs with $k\leq 1$ variables.   
\end{remark}

In general, many trees enumerated by Algorithm~\ref{alg:traverse} and counted in Theorem~\ref{thm:shortestpathtrees} will correspond to regions which are infeasible.
So it is desirable to independently bound the number of feasible regions.
\begin{proposition}\label{prop:feasible}
  Let $D$ be the weighted adjacency matrix of a directed graph on $n$ nodes.
  Then the number of feasible regions, which are induced by the shortest paths to a fixed target node, is at most
  \begin{equation}\label{eq:feasible}
    \sum_{i=0}^k \binom{k}{i} \frac{(n-1)!}{(n-i-1)!} \ \leq  \ \sum_{i=0}^k \binom{k}{i} (n-1)^i \ = \ n^k \enspace .
  \end{equation}
\end{proposition}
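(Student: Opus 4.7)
The plan is to exhibit an injection from the set of feasible regions into a combinatorial index set whose cardinality is exactly $\sum_{i=0}^k \binom{k}{i}\frac{(n-1)!}{(n-i-1)!}$. The natural choice of index set is the collection of pairs $(J, \phi)$ where $J \subseteq [k]$ (playing the role of the \emph{active variables}) and $\phi : J \hookrightarrow [n-1]$ is an injection recording the ``position'' of each active arc within the shortest-path tree. Once such an injection of regions into pairs is constructed, the counting is immediate: summing $\binom{k}{i}$ choices of $J$ times $\frac{(n-1)!}{(n-i-1)!}$ injections $\phi$, and then using $\frac{(n-1)!}{(n-i-1)!} \leq (n-1)^i$ together with the binomial theorem $\sum_i \binom{k}{i}(n-1)^i = n^k$, yields both inequalities in \eqref{eq:feasible}.

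To define the map, I would use the fact that every feasible region $R$ has a unique shortest-path tree $T(R)$ (unique because $R$ is full-dimensional and the genericity of the arc weights ensures no ties in the strict interior). Set $J(R) := \{j \in [k] : (u_j, v_j) \in T(R)\}$; since the variables are separated, the starting nodes $\{u_j : j \in J(R)\}$ are pairwise distinct non-target nodes, so they can be listed in a canonical order (for instance, by a topological ordering of $T(R)$ rooted at $t$, broken lexicographically on node labels). This ordering determines the injection $\phi_R : J(R) \hookrightarrow [n-1]$.

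The main obstacle will be verifying injectivity of the map $R \mapsto (J(R), \phi_R)$: one must show that the combinatorial data recovers the tree $T(R)$. The idea is that $J(R)$ pins down the outgoing arc at every node $u_j$ with $j \in J(R)$, while the $\phi_R$-ordering, together with the shortest-path inequalities \eqref{eq:shortest-path-cell} restricted to the constant-weight subgraph, forces the remaining outgoing arcs: at each non-active node, the unique choice is the one minimizing the parameterized distance, and by genericity this minimizer is unique. A careful induction on the $\phi_R$-rank, processing nodes from the target outwards, should formalize this reconstruction.

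An alternative route is induction on $k$. The base case $k=0$ has exactly one region by uniqueness of shortest paths under genericity. For the inductive step, one partitions the feasible regions by whether the last variable $x_k$ is active in the corresponding SPT: the inactive regions descend to feasible regions of a $(k-1)$-variable problem, while the active regions correspond to SPTs in which one outgoing arc is fixed, which can be counted against a modified $(k-1)$-variable instance. The bookkeeping required to recover the precise refined sum rather than only the crude bound $n \cdot n^{k-1} = n^k$ is exactly the same combinatorial data $(J,\phi)$, so in the end this approach converges with the direct injection argument above.
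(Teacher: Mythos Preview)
Your proposed injection $R \mapsto (J(R), \phi_R)$ fails to be injective, and the failure is exactly at the step you flagged as the main obstacle. The assertion that ``at each non-active node, the unique choice is the one minimizing the parameterized distance, and by genericity this minimizer is unique'' is incorrect: genericity guarantees a unique minimizer only at a fixed parameter value, not uniformly over the feasible domain. A non-active node $v$ can have several \emph{incomparable} shortest paths to $t$---one routing through each active variable arc and possibly one avoiding them all---and which of these sits in the shortest-path tree depends on the region. Concretely, take $n=4$, target $t=4$, a single variable arc $(1,2)$ of weight $x$, and constant arcs $(1,4),(2,4),(3,1),(3,2)$ of weights $10,1,1,5$. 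There are two regions with $J=\{1\}$, namely $0\le x\le 4$ and $4\le x\le 9$; their trees differ only in whether node $3$ exits via $(3,1)$ or via $(3,2)$, yet node $1$ has the same depth and the same position in any canonical topological order in both trees, so $\phi_R$ cannot distinguish them. The inductive alternative you sketch runs into the same obstruction.

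The paper's argument is genuinely different and does not attempt to label regions by tree data. For a fixed active set of size $i$, it bounds the number of regions geometrically, as the number of maximal cells in the common refinement of the tropical hypersurfaces attached to the $n-1$ non-target nodes. These split into two families. The $i$ hypersurfaces from the start nodes of the active arcs contribute at most $i!$ cells, via Lemma~\ref{lem:varnodes}. The remaining $n-i-1$ hypersurfaces come from nodes with at most $i+1$ incomparable paths to $t$; after a substitution these are tropical \emph{linear} forms in $i$ variables, and such an arrangement has at most $\binom{n-1}{i}$ regions (cf.\ Remark~\ref{rem:infeasible}). The product $i!\cdot\binom{n-1}{i}=\frac{(n-1)!}{(n-i-1)!}$ gives the summand in~\eqref{eq:feasible}. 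The essential input you are missing is this bound for arrangements of tropical linear hypersurfaces; it is a geometric fact with no direct combinatorial surrogate at the level of individual trees.
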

\begin{proof}
  There is a shortest-path tree for every feasible region.
  Thus we may estimate the number of feasible regions by counting shortest-path trees.
  Again we do this by fixing a set of $i$ active variables and take only those trees into account that include the corresponding arcs and no other arcs of variable weight.
  Now there are two types of tropical hypersurfaces arising from such a tree.
  Those which correspond to nodes whose outgoing arc weight is one of the active variables, there are $i$ of those, and those nodes whose outgoing arcs are of constant weight of which exist $n-i-1$.
  The common refinement of the first once has at most $i!$ many regions as this is the bound of shortest-path trees in Lemma~\ref{lem:varnodes}.
  The latter hypersurfaces result from nodes which have at most $i+1$ incomparable shortest paths to the target.
  Moreover, the total weights of those paths are linear, up to substitution of variables.
  Hence, the common refinement of these hypersurfaces has at most $\tbinom{n-1}{i}$ regions; see also Remark~\ref{rem:infeasible}.
  There are $\tbinom{k}{i}$ ways to choose $i$ active variables hence the total number of feasible regions is at most
  \[
    \sum_{i=0}^k \binom{k}{i} \cdot i! \cdot \binom{n-1}{i} \ = \ \sum_{i=0}^k \binom{k}{i} \frac{(n-1)!}{(n-i-1)!}  \enspace .
  \]
\end{proof}

The number of feasible regions in Proposition~\ref{prop:feasible} is much smaller than the bound of the number of trees in Theorem~\ref{thm:shortestpathtrees}.
This raises the question why we let Algorithm~\ref{alg:traverse} take such a \enquote{detour}.
For instance, we could employ the parametric Floyd--Warshall algorithm, which runs in $O(k\cdot 2^k n^3)$ time and thus is faster than Algorithm~\ref{alg:traverse}.
However, from Floyd--Warshall we only obtain the Kleene star, an $n{\times}n$ matrix with entries in $\TT[x_1,\ldots,x_k]$, from which it is rather expensive to extract all shortest-path trees or regions. 
Conceptually we could compute the decomposition into regions by a dual convex hull computation in $\RR^{k+1}$ but this requires at least as many as \eqref{eq:feasible} linear constraints.
In Section~\ref{sec:computations} we will see that Algorithm~\ref{alg:traverse} can easily compute instances for $k=10$ and $n$ in the range of hundreds or even thousands.
Convex hull computations of that size are entirely out of question; cf.\ \cite{polymake:2017} for a recent survey on the subject.
Thus a much superior strategy is to first enumerate the trees and then to filter for the feasible regions by linear programming.
As an additional advantage the latter step can be parallelized trivially.

Yet, from a practical point of view, it may also be useful to combine the parameterized Floyd-Warshall algorithm with Algorithm~\ref{alg:traverse}.
The two methods are quite different and thus exhibit different advantages.
Imagine a car driver with a very long drive to her destination city.
The traffic situation in the city depends on the time when she reaches the city.
Therefore, the driver's navigation system might evaluate the parameterized Kleene-star with up-to-date data whenever there is a branching point.
This would lead to a complete choice of reasonable routes at any given time.
Other scenarios are conceivable, where the Kleene star is used to compute a description by inequalities of a feasible region in $O(k\cdot n^2)$ time from a generic feasible point.
Such information could be valuable for network operators or providers.
While investigations of this kind look promising, in the next section we restrict our attention to several instances of one scenario.

\section{Computations}
\label{sec:computations}
\noindent
We report on extensive computational experiments with \polymake \cite{DMV:polymake}.

\subsection{Implementation}
In the following we collect some details on implementing Algorithm~\ref{alg:traverse}. 
The genericity of the matrix $D$ can be achieved by a symbolic perturbation of the arc weights.
Choosing an ordering on all arcs induces a (lexicographic) ordering on arbitrary sets of edges.
In particular, this gives a total ordering on the set of all (shortest-path) trees.
We may pick the ordering on the arcs in such a way that the shortest-path tree produced by Algorithm~\ref{alg:startnode} is minimal.
The lexicographic ordering on the shortest-path trees allows to traverse $\graphOfShortestPathTrees(D)$ without a lookup table or cache.
This can be interpreted in terms of Dijkstra variants based on \enquote{labeling} and \enquote{scanning}; cf.\ \cite[\S7.1]{Tarjan:1983}.
See also, e.g., \cite{Gawrilow:2008} for a dynamic routing algorithm employing that idea.

For maximal speed it is relevant to organize the trees and especially the queue of trees to be processed by means of suitable data structures.
Most importantly, there is an improved version of Dijkstra's algorithm by Fredman and Tarjan \cite{Fredman+Tarjan:1987} based on \emph{Fibonacci-Heaps}.
The latter leads to a complexity of $O(m+n \log(n))$ in the unparameterized setting; see also \cite[\S7.4]{Schrijver03:CO_A}.

An optimized variant of Algorithm~\ref{alg:traverse} has been implemented by Ewgenij Gawrilow in the \polymake software system \cite{DMV:polymake}.
This implementation uses dynamic programming and backtracking to traverse the graph implicitly.
The code is available as an extension to \polymake, version 4.1; see \url{https://polymake.org/extensions/polytropes}.

\subsection{Real-world traffic data}
\label{subsec:TNTP}
To show that our methods are feasible in practice we tried the parameterized Dijkstra Algorithm \ref{alg:traverse} on real-world data sets from the Transportation Networks repository \cite{TransportationNetworks}.
Before we explain our experimental setup we wish to spend a few words on those traffic data.
We focus on the files with the extension \texttt{tntp}.
Each file encodes a directed graph which comes from a road network and additional information about the travel time along the arcs.
For every arc $(u,v)$ the \emph{link travel time}, depending on the flow $x$, is the quantity
\begin{equation}\label{eq:BPR}
  \realtravel_{(u,v)}(x) \ = \ \freetravel_{(u,v)} \cdot \bigl(1+\bias(\tfrac{x}{\capacity_{(u,v)}})^{\power}\bigr) \enspace ,
\end{equation}
where $\freetravel_{(u,v)}$ is the \emph{free flow travel time}, $\bias$ is the \emph{bias}, $\capacity_{(u,v)}$ is the capacity and $\power$ is the \emph{power}.
This formula was devised by the \emph{Bureau of Public Roads (BPR)}, a predecessor organization of the \emph{Federal Highway Administration (FHWA)} of the United States. 
The \texttt{tntp}-files contain all these parameters for every arc.
In our data we found $\bias=1$ and $\power=4$ throughout; these parameters are used to model certain nonlinearities extracted from empirical data.
Usually there are also some \emph{zones}, i.e., nodes which no traffic can go through.
For a more comprehensive discussion of the data and the parameters we recommend the web site \cite{TransportationNetworks} and the references given there.
\begin{figure}[t]\centering
  \input{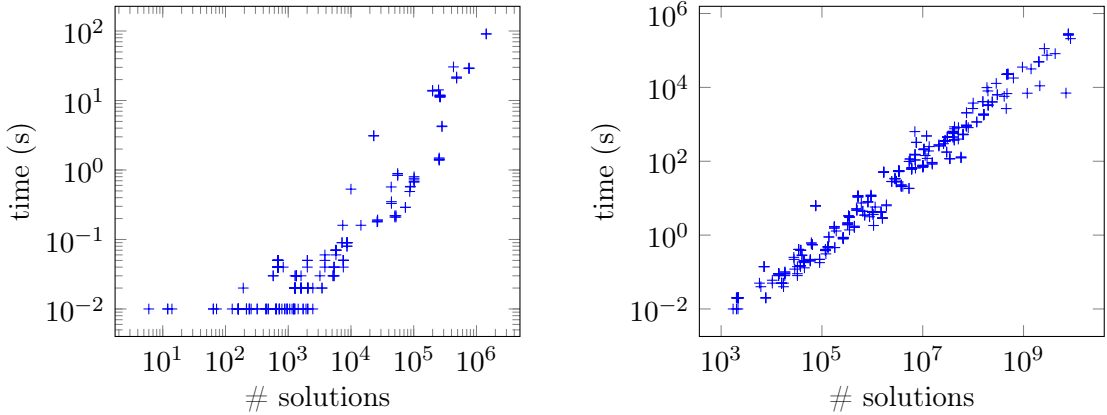}
  \caption{Data set \enquote{Berlin-Mitte-Center}.  \polymake running times versus number of solutions, both log-scaled.
    Left: $p=0.05$ yielding 25 variable weights.  Right: $p=0.08$ yielding 42 variable weights (computation for one node aborted after a week).
}
\label{fig:tntp-timings}
\end{figure}

All timings in our experiments were taken on Intel(R) Core(TM) i7-3930K CPU @ 3.20GHz, turbo 3.73 GHz (BogoMIPS: 6399.75) with openSUSE 42.3 (Linux 4.4.132-53).
The memory consumption did not exceed 200~MB.

\subsection{One graph, all nodes}\label{subsubsec:berlin-mitte-center}
In the first scenario we considered the \enquote{Berlin-Mitte-Center} data set from \cite{TransportationNetworks}, which was originally provided by Jahn et al.\ \cite{Moehring:2005}.
This network describes a directed graph with 398 nodes and 871 arcs.
The first 36 nodes are zones.
Since no traffic can go through a zone, we removed them, along with the incident arcs.
The remaining network has 362 nodes and 583 arcs.

From this data file we created random instances in the following way.
As an additional parameter we fix some probability $p\geq 0$.
Each arc independently receives a variable weight with probability~$p$.
For the constant weights we take the free flow travel times, which are always positive.
Each variable weight is constrained to an interval from the free flow travel time to the link travel time \eqref{eq:BPR} for a flow value set to a random proportion of the link capacity.
That is, e.g., for $p=0$ we get a usual weighted digraph where the arc weights are the free flow travel times.
For positive $p$ we get some variable weights which are intervals $[\freetravel_{(u,v)}, \realtravel_{(u,v)}(r\cdot\capacity_{(u,v)})]$ with $0\leq r\leq 1$, and $0<r<1$ almost surely.
This is the scenario discussed in Section~\ref{sec:dijkstra}.
The complexity of Algorithm \ref{alg:traverse} is primarily controlled by the number of arcs with variable arc weights.
Moreover, for a fixed graph that complexity is proportional to the size of the output, i.e., the number of combinatorial types of shortest-path trees.
So, in order to obtain a computationally feasible setup, the probability $p$ cannot be too high.
That is, on most of our arcs the flow is set to zero (and the arc weight is $\freetravel_{(u,v)}$), while on a small percentage of the arcs the flow is between zero and some fraction of the capacity (and the arc weight is a variable with lower bound $\freetravel_{(u,v)}$).
In this way our experiment models the situation early in the morning, when most roads are still empty and the first few vehicles start to enter the traffic.

For the first experiment, by setting the probability to $p=0.05$, we obtained 25 arcs with variable weights, and this is about 4.3\% of the total number of arcs.
The second experiment is similar, with $p=0.08$ and 42 variable weights (about 7\% of the arcs).
For both instances we applied the parameterized Dijkstra algorithm to all the 362 nodes.
Figure~\ref{fig:tntp-timings} has an overview of the timings.

For $p=0.05$ most computations could be completed by \polymake within less than a second.
The largest one took nearly 100 seconds with more than one million combinatorial types of shortest-path trees.
This network is displayed in Figure~\ref{fig:bmc+p005+v25+nz}.

\begin{figure}\centering
  \includegraphics[height=.67\textheight]{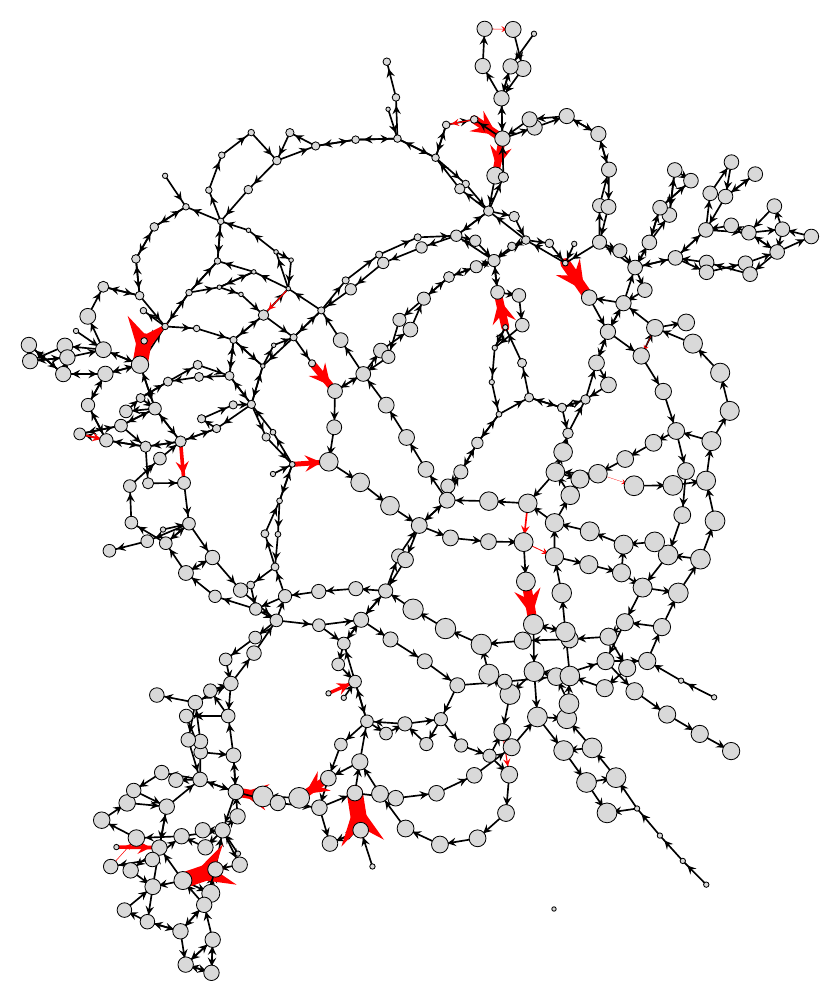} 
  \caption{Data set \enquote{Berlin-Mitte-Center} with $p=0.05$ resulting in 25 variable weights.
    Arcs with variable weights are red; their line width is proportional to the difference between maximum and minimum travel times.
    Node sizes proportional to $\log\log(\#\text{solutions})$.
    Layout obtained via \neato from the \graphviz package \cite{graphviz}.}
  \label{fig:bmc+p005+v25+nz}
\end{figure}

\begin{table}[!ht]
  \centering
  \caption{The maximum and average performance on the the data from \cite{TransportationNetworks}.  Ten runs each with $k=10$ variables.  The bar--whisker plots show logarithmic performance.}
  \label{tab:tntp-all}
  \renewcommand{\arraystretch}{0.9}
	\begin{tabular*}{0.95\linewidth}{l@{\hspace{0.3cm}}rr@{\hspace{1cm}}rrr@{\hspace{0.5cm}}rrr}
		\toprule
		& \multirow{2}{*}{$n$} & \multirow{2}{*}{$m$} & \multicolumn{2}{c}{Maximum} & \multicolumn{2}{c}{Average}\\
		&&&\# Sol.& Sol./sec &\# Sol.&Sol./sec\\
		\midrule
		Anaheim & 378 & 796 		&    32 &  35199.5 &    5.4 &  6189.77\\
		Barcelona & 910 & 1957 		&  5184 &  12548.4 &  524.7 &  1997.03\\
		Berlin-Center & 12116 & 19724 	& 36480 & 417772.0 & 4622.4 & 68506.98\\
		Berlin-Mitte-Center & 362 & 583 &   144 &  55907.3 &   19.9 & 11500.81\\	
		Berlin-MPFC & 877 & 1410	&  4590 &  59761.6 &  524.4 & 23118.37\\	
		Berlin-Pberg-Center & 314 & 451	&     4 & 113636.0 &    1.6 & 40322.51\\ 	
		Berlin-Tiergarten & 335 & 560 	&     8 &  14670.6 &    4.5 &  7865.83\\
		ChicagoRegional & 11192 & 35436 &   192 &   5094.8 &   36.8 &   981.14\\
		ChicagoSketch & 546 & 2176 	&   512 & 202605.0 &  103.7 & 41586.64\\	
		Berlin-Fhain-Center & 201 & 339 &    90 & 120967.0 &   12.7 & 42332.21\\
		Hessen-Asym & 4415 & 6184 	&     1 &    172.6 &    1.0 &   167.29\\
		Philadelphia & 11864 & 30779 	&    24 &    713.0 &   11.2 &   328.49\\
		Sydney & 29849 & 67381 		&    12 &    150.6 &    9.2 &   114.05\\
		Terrassa-Asym & 1554 & 2953 	&    32 &  11790.8 &    4.1 &  1522.54\\
		Winnipeg-Asym & 903 & 1923 	&     4 &   2423.2 &    1.6 &   978.36\\
		Winnipeg & 905 & 2284 		&   160 &   3530.2 &   18.8 &  1436.35\\
   \bottomrule
   \end{tabular*}
   \input{logboxdiagram.tex}
\end{table}

The case where $p=0.08$ is quite different.
For some nodes the computations took several hours, and one computation was aborted after more than one week.
By and large this shows the limits of our approach.
Note that not only the total number of variable arc weights matter but also how clustered they are near the target node; this can also be seen in Figure~\ref{fig:bmc+p005+v25+nz} in the smaller case $p=0.05$.
The largest complete computations produced several billions of shortest-path trees.
The diagrams in Figure~\ref{fig:tntp-timings}, which are log-scaled in both directions, reflect the output-sensitivity of Algorithm~\ref{alg:traverse} as predicted by Theorem~\ref{thm:shortestpathtrees}.

\subsection{Many graphs, some nodes}
In the second scenario we looked at all the data sets (\texttt{\_{}net.tntp} files) from \cite{TransportationNetworks}.
These were preprocessed as in the first scenario, i.e., by removing the zones.
This lead to excluding the three smallest data sets Austin, Braess, and SiouxFalls because too few arcs remained.
As a result we processed 16 directed graphs.
The most demanding one is Berlin-Center with $n=12116$ nodes and $m=19724$ arcs.

This time we fixed the number of variable arcs a priori to $k=10$, and this set of arcs was chosen uniformly at random.
On each variable arc $(u,v)$ we took the interval $[\freetravel_{(u,v)},2\cdot\freetravel_{(u,v)}]$.
We picked a node uniformly at random as the root, which the shortest-path trees are directed to; and this experiment was repeated ten times per instance.

Qualitatively the parameterized Dijkstra Algorithm \ref{alg:traverse} behaves exactly as in the first scenario in Section~\ref{subsubsec:berlin-mitte-center}.
The running times vary considerably, but the predominant factor is the total number of solutions.
This is consistent with our theoretical analysis of the running time from Theorem~\ref{thm:shortestpathtrees}.
And this also agrees with what we observed experimentally in Figure~\ref{fig:tntp-timings} for the first scenario.
Instead of the timings itself, Table~\ref{tab:tntp-all} gives basic statistical information about the \emph{performance}, which we define as the number of solutions per second.
Since we also list the (maximum and the average of) the number of solutions the actual running times can be deduced if necessary.
Here we have fewer variables (but several much larger graphs), and thus the fluctuations are larger.
Again this is no surprise; compare the left and the right diagram in Figure~\ref{fig:tntp-timings}.
A more detailed idea about the entire statistics can be derived from the bar--whisker plots below Table~\ref{tab:tntp-all}.
For the decadic logarithm of the performance it shows the minimum, the 25\% percentile, the median, the 75\% percentile and the maximum per data set.

We think that even the fairly small number of ten random samples per graph suffices to show that the overall behavior of Algorithm \ref{alg:traverse} and its \polymake implementation is well captured by the comprehensive analysis in Section~\ref{subsubsec:berlin-mitte-center}.

\section{Concluding remarks} 
\label{sec:concluding}
\noindent
In Section~\ref{sec:computations} we provided experimental evidence to show that our approach is viable in practice, provided that the output size is moderate.
Indeed, we are not aware of any other limiting factor.
For instance, other models of randomly picking variable link travel times significantly change the running times only as far as the total number of shortest-path trees is affected.
This is supported by Table~\ref{tab:tntp-all} which exhibits that the running time per solution found, on a logarithmic scale, stays in a narrow range over a wide selection of rather different networks.
That is to say, the diagram to the right of Figure~\ref{fig:tntp-timings} captures our algorithm's asymptotic behavior on sufficiently large networks rather well.

Our assumption on separated variables models parametric shortest-path problems with the maximal degree of independence among the parameters.
It is conceivable that our approach can be extended to more elaborate settings, but at the price of a greater technical overhead in the analysis.

\bibliographystyle{alpha}
\bibliography{References}

\newcommand{\etalchar}[1]{$^{#1}$}
\begin{thebibliography}{BTEGN09}

\bibitem[AAA{\etalchar{+}}06]{AlonEtAl:2006}
Noga Alon, Baruch Awerbuch, Yossi Azar, Niv Buchbinder, and Joseph Naor.
\newblock A general approach to online network optimization problems.
\newblock {\em ACM Trans. Algorithms}, 2(4):640--660, 2006.

\bibitem[ABGJ18]{ABGJ:2018}
Xavier Allamigeon, Pascal Benchimol, St\'ephane Gaubert, and Michael Joswig.
\newblock Log-barrier interior point methods are not strongly polynomial.
\newblock {\em SIAM J. Appl. Algebra Geom.}, 2(1):140--178, 2018.

\bibitem[AGG12]{AkianGaubertGutermann12}
Marianne Akian, St{\'e}phane Gaubert, and Alexander Guterman.
\newblock Tropical polyhedra are equivalent to mean payoff games.
\newblock {\em Internat. J. Algebra Comput.}, 22(1):1250001, 43, 2012.

\bibitem[AGH{\etalchar{+}}17]{polymake:2017}
Benjamin Assarf, Ewgenij Gawrilow, Katrin Herr, Michael Joswig, Benjamin
  Lorenz, Andreas Paffenholz, and Thomas Rehn.
\newblock Computing convex hulls and counting integer points with \polymake.
\newblock {\em Math. Program. Comput.}, 9(1):1--38, 2017.

\bibitem[AHU75]{Ahoetal:1975}
Alfred~V. Aho, John~E. Hopcroft, and Jeffrey~D. Ullman.
\newblock {\em The design and analysis of computer algorithms}.
\newblock Addison-Wesley Publishing Co., Reading, Mass.-London-Amsterdam, 1975.
\newblock Second printing, Addison-Wesley Series in Computer Science and
  Information Processing.

\bibitem[Ber16]{Bernstein:2016}
Aaron Bernstein.
\newblock Maintaining shortest paths under deletions in weighted directed
  graphs.
\newblock {\em SIAM J. Comput.}, 45(2):548--574, 2016.

\bibitem[BTEGN09]{MR2546839}
Aharon Ben-Tal, Laurent El~Ghaoui, and Arkadi Nemirovski.
\newblock {\em Robust optimization}.
\newblock Princeton Series in Applied Mathematics. Princeton University Press,
  Princeton, NJ, 2009.

\bibitem[But10]{Butkovic10}
Peter Butkovi{\v{c}}.
\newblock {\em Max-linear systems: theory and algorithms}.
\newblock Springer Monographs in Mathematics. Springer-Verlag London, Ltd.,
  London, 2010.

\bibitem[CDG19]{MR3907225}
Andr\'{e} Chassein, Trivikram Dokka, and Marc Goerigk.
\newblock Algorithms and uncertainty sets for data-driven robust shortest path
  problems.
\newblock {\em European J. Oper. Res.}, 274(2):671--686, 2019.

\bibitem[EGH{\etalchar{+}}te]{graphviz}
John Ellson, Emden Gansner, Yifan Hu, Erwin Janssen, Stephen North, et~al.
\newblock \url{www.graphviz.org}, no date.

\bibitem[Fre76]{Fredman:1976}
Michael~L. Fredman.
\newblock New bounds on the complexity of the shortest path problem.
\newblock {\em SIAM J. Comput.}, 5(1):83--89, 1976.

\bibitem[FT87]{Fredman+Tarjan:1987}
Michael~L. Fredman and Robert~Endre Tarjan.
\newblock Fibonacci heaps and their uses in improved network optimization
  algorithms.
\newblock {\em J. Assoc. Comput. Mach.}, 34(3):596--615, 1987.

\bibitem[GGT89]{Gallo+Grigoriadis+Tarjan:1989}
Giorgio Gallo, Michael~D. Grigoriadis, and Robert~E. Tarjan.
\newblock A fast parametric maximum flow algorithm and applications.
\newblock {\em SIAM J. Comput.}, 18(1):30--55, 1989.

\bibitem[GJ00]{DMV:polymake}
Ewgenij Gawrilow and Michael Joswig.
\newblock \polymake: a framework for analyzing convex polytopes.
\newblock In {\em Polytopes---combinatorics and computation (Oberwolfach,
  1997)}, volume~29 of {\em DMV Sem.}, pages 43--73. Birk\-h\"au\-ser, Basel,
  2000.

\bibitem[GKMS08]{Gawrilow:2008}
Ewgenij Gawrilow, Ekkehard K\"ohler, Rolf~H. M\"ohring, and Bj\"orn Stenzel.
\newblock Dynamic routing of automated guided vehicles in real-time.
\newblock In {\em Mathematics---key technology for the future}, pages 165--177.
  Springer, Berlin, 2008.

\bibitem[GLS93]{GLS}
Martin Gr\"{o}tschel, L\'{a}szl\'{o} Lov\'{a}sz, and Alexander Schrijver.
\newblock {\em Geometric algorithms and combinatorial optimization}, volume~2
  of {\em Algorithms and Combinatorics}.
\newblock Springer-Verlag, Berlin, second edition, 1993.

\bibitem[GT88]{Goldberg+Tarjan:1988}
Andrew~V. Goldberg and Robert~E. Tarjan.
\newblock A new approach to the maximum-flow problem.
\newblock {\em J. Assoc. Comput. Mach.}, 35(4):921--940, 1988.

\bibitem[HT16]{Han+Takaoka:2016}
Yijie Han and Tadao Takaoka.
\newblock An {$O(n^3\log\log n/\log^2 n)$} time algorithm for all pairs
  shortest paths.
\newblock {\em J. Discrete Algorithms}, 38/41:9--19, 2016.

\bibitem[JL20]{JoswigLoho:1707.09305}
Michael Joswig and Georg Loho.
\newblock Monomial tropical cones for multicriteria optimization.
\newblock {\em SIAM J. Discrete Math.}, 34(2):1172--1191, 2020.

\bibitem[JMSSM05]{Moehring:2005}
Olaf Jahn, Rolf~H. M\"{o}hring, Andreas~S. Schulz, and Nicol\'{a}s~E.
  Stier-Moses.
\newblock System-optimal routing of traffic flows with user constraints in
  networks with congestion.
\newblock {\em Oper. Res.}, 53(4):600--616, 2005.

\bibitem[MS15]{Tropical+Book}
Diane Maclagan and Bernd Sturmfels.
\newblock {\em Introduction to tropical geometry}, volume 161 of {\em Graduate
  Studies in Mathematics}.
\newblock American Mathematical Society, Providence, RI, 2015.

\bibitem[MSS15]{MatuschkeSkutellaSoto:2015}
Jannik Matuschke, Martin Skutella, and Jos\'e Soto.
\newblock Robust randomized matchings.
\newblock In {\em Proceedings of the 26th Annual ACM-SIAM Symposium on Discrete
  Algorithms (SODA 2015)}, pages 1904--1915. ACM, 2015.

\bibitem[Sch03]{Schrijver03:CO_A}
Alexander Schrijver.
\newblock {\em Combinatorial optimization. {P}olyhedra and efficiency. {V}ol.
  {A}}, volume~24 of {\em Algorithms and Combinatorics}.
\newblock Springer-Verlag, Berlin, 2003.
\newblock Paths, flows, matchings, Chapters 1--38.

\bibitem[Tar83]{Tarjan:1983}
Robert~Endre Tarjan.
\newblock {\em Data structures and network algorithms}, volume~44 of {\em
  CBMS-NSF Regional Conference Series in Applied Mathematics}.
\newblock Society for Industrial and Applied Mathematics (SIAM), Philadelphia,
  PA, 1983.

\bibitem[Tra17]{Tran:2017}
Ngoc~Mai Tran.
\newblock Enumerating polytropes.
\newblock {\em J. Combin. Theory Ser. A}, 151:1--22, 2017.

\bibitem[{Tra}te]{TransportationNetworks}
{Transportation Networks for Research Core Team}.
\newblock Transportation networks for research.
\newblock \url{https://github.com/bstabler/TransportationNetworks}, no date.
\newblock Accessed {A}ugust 14, 2018.

\bibitem[YY98]{YuYang:1998}
Gang Yu and Jian Yang.
\newblock On the robust shortest path problem.
\newblock {\em Computers Ops. Res.}, 25(6):457--468, 1998.

\end{thebibliography}

\end{document}